\numberwithin{equation}{section}
\newtheorem{theorem}{Theorem}[section]
\newtheorem{proposition}[theorem]{Proposition}
\newtheorem{lemma}[theorem]{Lemma}
\newtheorem{corollary}[theorem]{Corollary}
\newtheorem{Definition}[theorem]{Definition}
\newtheorem{Remark}[theorem]{Remark}
\newenvironment{remark}{\begin{Remark}\rm}{\end{Remark}}
\newtheorem{RHproblem}[theorem]{RH problem}
\newtheorem{Example}[theorem]{Example}
\newcommand{\C}{\mathbb{C}}
\newcommand{\D}{\mathbb D}
\newcommand{\Q}{\mathbb{Q}}
\newcommand{\R}{\mathbb{R}}
\newcommand{\T}{\mathbb{T}}
\newcommand{\Z}{\mathbb{Z}}
\newcommand{\BB}{\mathcal B}
\newcommand{\CC}{\mathcal C}
\newcommand{\DD}{\mathcal D}
\newcommand{\EE}{\mathcal E}
\newcommand{\FF}{\mathcal F}
\newcommand{\GG}{\mathcal G}
\newcommand{\PP}{\mathcal P}
\newcommand{\eps}{\epsilon}
\newcommand{\p}{\partial}
\newcommand{\vphi}{\varphi}
\renewcommand{\Im}{{\rm Im} \,}
\def\supp{\mathop{\mathrm{supp}}\nolimits}
\renewcommand{\bar}{\overline}
\renewcommand{\tilde}{\widetilde}
\renewcommand{\hat}{\widehat}
\renewcommand{\check}{\widecheck}
\begin{document}
\title{
Boundary value problems and\\ Heisenberg uniqueness pairs}
\author{S. Rigat and F. Wielonsky}

\maketitle 

\begin{abstract}  
We describe a general method for constructing Heisenberg uniqueness pairs $(\Gamma,\Lambda)$ in the euclidean space $\R^{n}$ based on the study of boundary value problems for partial differential equations. As a result, we show, for instance, that any pair made of the boundary $\Gamma$ of a bounded convex set $\Omega$ and a sphere $\Lambda$ is an Heisenberg uniqueness pair if and only if the square of the radius of $\Lambda$ is not an eigenvalue of the Laplacian on $\Omega$. The main ingredients for the proofs are the Paley-Wiener theorem, the uniqueness of a solution to a homogeneous Dirichlet or initial boundary value problem, the continuity of single layer potentials, and some complex analysis in $\C^{n}$. Denjoy's theorem on topological conjugacy of circle diffeomorphisms with irrational rotation numbers is also useful.
\end{abstract}
\section{Introduction}
The interpretation of the uncertainty principle in mathematics is a classical theme, especially in harmonic analysis, see e.g.\ \cite{HJ} for a thorough presentation of the subject. 
Heisenberg uniqueness pair is a related notion which was introduced in \cite{HM}. It consists in the following.
\\[.5\baselineskip]
A pair 
$(\Gamma,\Lambda)$, where $\Gamma$ is a hypersurface in $\R^{n}$, $n\geq2$, and $\Lambda$ is a set in $\R^{n}$, is an {\it Heisenberg uniqueness pair} (sometimes abbreviated HUP) if for any complex-valued function $g$ integrable on $\Gamma$ with respect to $d\sigma$, the surface measure on $\Gamma$, the following implication holds true,
$$\hat g(\lambda)=\int_{\Gamma}e^{-i\lambda\cdot x}g(x)d\sigma(x)=0\text{  on }\Lambda\quad\implies
\quad g=0~\text{ a.e.\ on }\Gamma.$$
The function $\hat g(\lambda)$ is the Fourier transform, in the sense of distribution, of the measure $gd\sigma$, also called the Fourier-Stieltjes transform of $g$. Note that, by the Hahn-Banach theorem, the above condition is equivalent to the fact that the vector space spanned by the family of exponentials $e^{i\lambda\xi}$, $\lambda\in\Lambda$, is weak-* dense in $L^{\infty}(\Gamma)$.
Also, a similar notion is that of a mutually annihilating pair $(S,\Sigma)$ of Borel subsets of $\R$ of positive measure, satisfying
$$
\forall\vphi\in L^{2}(S),~\supp(\hat\vphi)\subset\Sigma\quad\implies\quad\vphi=0,
$$
see \cite{HJ}.

It is easy to check that the notion of an Heisenberg uniqueness pair satisfies two elementary properties, namely,
\\[.5\baselineskip]
1) it is invariant by translation: if $(\Gamma,\Lambda)$ is an HUP then $(\Gamma+x_{0},\Lambda+\lambda_{0})$ with $x_{0},\lambda_{0}\in\R^{n}$, is also an HUP, 
\\[.5\baselineskip]
2) if $T$ is an invertible linear transformation with adjoint $T^{*}$, then 
\begin{equation}\label{rem-T}
(\Gamma,\Lambda)\text{ is an HUP}\implies(T(\Gamma),(T^{-1})^{*}(\Lambda))\text{ is also an HUP.}
\end{equation}
In \cite{HM}, it was proven that the pair made of the hyperbola and a cross lattice,
$$
\Gamma=\left\{\left(x_{1}, x_{2}\right) \in \mathbb{R}^{2} : x_{1} x_{2}=1\right\},\quad
\Lambda_{\alpha, \beta} :=(\alpha \mathbb{Z} \times\{0\}) \cup(\{0\} \times \beta \mathbb{Z}),\quad
\alpha,\beta>0,
$$
is a HUP if and only if $\alpha\beta\leq1$.
Since the publication of \cite{HM}, many other examples have appeared in the literature. For instance, in \cite{SJ1,L,SJ2,BB,GS,BA,JK}, one may find examples of HUP's in the plane $\R^{2}$ involving sets like circles, union of lines, ellipses, parabola, polygons or very specific curves. In \cite{GV,SR}, one may also find examples of HUP's in $\R^{n}$, $n>2$, again involving spheres, paraboloids or certain cones.

The aim of the present paper is to obtain Heisenberg uniqueness pairs in $\R^{n}$, $n\geq2$, under rather general assumptions on the set $\Gamma$ since, in some of our examples, it is only assumed to be the boundary of a bounded convex set.
Our method is inspired by the study in \cite{A}, which was concerned with providing rigorous foundations of the Fokas method for boundary value problems, showing, for linear elliptic pde's on bounded convex domains, that a solution to the global relation between the Dirichlet and Neumann boundary data implies the existence of a solution to the Dirichlet problem, and that the solution to the global relation is unique. The link between \cite{A} and the problem of determining Heisenberg uniqueness pairs does not seem to have been previously noticed.

When convenient, we will denote by $X$ a $n$-tuple of variables $X_{1},\ldots,X_{n}$ and by $D$ the tuple of partial derivations $\p_{X_{1}},\ldots,\p_{X_{n}}$.
For a polynomial $P(X)=\sum_{|\alpha|\leq m}a_{\alpha}X^{\alpha}\in\C[X]$, the differential operator $P(-iD)$ is defined by
$$
P(-iD)=\sum_{|\alpha|\leq m}a_{\alpha}(-i)^{|\alpha|}\p_{X_{1}}^{\alpha_{1}}\dots\p_{X_{n}}^{\alpha_{n}}.
$$
We will also denote by $Z(P)$ the subset of $\C^{n}$ of zeros of $P$, and by $Z_{\R}(P)=Z(P)\cap\R^{n}$ the subset of its real zeros.

Let $E_{P}(x)$ be a fundamental solution of the differential operator $P(-iD)$, that is 
$P(-iD)E_{P}=\delta$ in the sense of distribution, where $\delta$ denotes the Dirac delta distribution. 
We will make use of potentials of a single layer,
\begin{equation}\label{def-Phi}
\Phi g(x):=\int_{\Gamma}g(y)E_{P}(x-y)d\sigma(y),\qquad x\in\R^{n},
\end{equation}
where $g$ is some function defined on $\Gamma$, see Section \ref{Prelim} for more details.

Let us now state our main result.
\begin{theorem}\label{main}
Let $\Omega$ be a bounded convex domain in $\R^{n}$ with a piecewise $C^{1}$ boundary $\Gamma$, 
and let $d\sigma$ be the surface measure on $\Gamma$ (the arc measure when $n=2$).
Let $P(X_{1},\ldots,X_{n})$ be a polynomial with real coefficients such that
\\[.5\baselineskip]
(i) $P$ is a square-free polynomial in $\C[X]$, i.e.\ each irreducible factor of $P$ is of multiplicity~1, and for at least one variable, e.g.\ $X_{1}$, one has $P(X)=a_{1}X_{1}^{d}+\cdots$, where the nonzero leading coefficient $a_{1}$ is a constant (i.e.\ independent from the other variables).
\\[.5\baselineskip]
(ii) 
each irreducible component $V_{i}$ of the variety $Z(P)$ is defined by a real polynomial, and contains a point in $V_{i,\R}=V_{i}\cap\R^{n}$, which is smooth as a point of $V_{i}$ (i.e.\ the Jacobian has rank 1 at this point).
\\[.5\baselineskip]
(iii) For some space $\DD_{P}(\bar\Omega)$ of functions defined on $\bar\Omega$, 
the homogeneous Dirichlet problem 
\begin{align*}
P(-iD)u(x) & =0,\qquad x\in\Omega, 
\\[5pt]
u(x) & = 0,\qquad x\in\Gamma, 
\end{align*}
has the unique solution $u=0$ in $\DD_{P}(\bar\Omega)$.
\\[.5\baselineskip]
(iv) For some space $\GG(\Gamma)\subset L^{1}(\Gamma)$, of complex-valued functions $g$ defined on $\Gamma$, the potential $\Phi g$, associated to $g\in\GG(\Gamma)$, 
defined in (\ref{def-Phi}), is a continuous function in $\R^{n}$, and 
$$
\forall g\in\GG(\Gamma),\quad\Phi g|_{\bar\Omega}\in\DD_{P}(\bar\Omega).
$$
Then 
$(\Gamma,Z_{\R}(P))$ is a Heisenberg uniqueness pair for $\GG(\Gamma)$, that is,
$$
\forall g\in \GG(\Gamma),\quad\hat g=0\text{ on }Z_{\R}(P)\quad\implies\quad g=0\text{ a.e.\ on }\Gamma.
$$
\end{theorem}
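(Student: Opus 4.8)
The plan is to relate the Fourier transform $\hat g$ to the single layer potential $\Phi g$ and to reduce the statement to the Dirichlet uniqueness assumption (iii). The starting point is the identity $\Phi g=E_{P}*(g\,d\sigma)$, which gives $P(-iD)\Phi g=g\,d\sigma$ in the distributional sense and, after a Fourier transform, $P\cdot\widehat{\Phi g}=\hat g$. Since $\Gamma$ is bounded, $g\,d\sigma$ is a compactly supported measure, so by the Paley--Wiener theorem $\hat g$ extends to an entire function on $\C^{n}$ of exponential type, with growth governed by the support function of the convex set $\bar\Omega$. The whole argument then hinges on showing that the hypothesis $\hat g=0$ on $Z_{\R}(P)$ forces $\Phi g$ to be supported in $\bar\Omega$.

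The heart of the proof is a complex-analytic step showing that $\hat g/P$ is again an entire function. First I would upgrade the vanishing of $\hat g$ from the real points to the whole complex variety: by (ii) each irreducible component $V_{i}$ of $Z(P)$ is cut out by a real polynomial and contains a smooth real point near which $V_{i,\R}$ is a real-analytic submanifold of real dimension $n-1$, a maximally totally real submanifold of the complex manifold $V_{i}$ and hence a uniqueness set for functions holomorphic on $V_{i}$; since $\hat g$ restricts to a holomorphic function on $V_{i}$ vanishing on $V_{i,\R}$, it vanishes on all of $V_{i}$, and thus on $Z(P)$. Next, square-freeness (i) lets me divide: writing $P=c\prod_{i}P_{i}$ with distinct irreducible factors, and noting that $P_{i}$ has simple zeros along the smooth locus of $V_{i}=Z(P_{i})$ where $\hat g$ vanishes, $\hat g/P_{i}$ is holomorphic there and extends across the singular locus (of complex codimension $\ge 2$) by Riemann's removable singularity theorem; iterating over the distinct factors yields that $\hat g/P$ is entire. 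Finally, the normalization in (i) that $P$ is monic (up to a constant) of degree $d$ in $X_{1}$ is exactly what is needed to control the exponential type of the quotient: a division formula in the variable $X_{1}$ (for fixed values of the remaining variables, whose roots stay in a bounded region because the leading coefficient is a nonzero constant) shows that $\hat g/P$ is still of exponential type with the same support function of $\bar\Omega$, and is polynomially bounded on $\R^{n}$.

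With this in hand I would invoke the converse Paley--Wiener--Schwartz theorem. Because the difference $\hat E_{P}-[1/P]$ of a tempered fundamental solution and a chosen regularization of $1/P$ satisfies $P\cdot(\hat E_{P}-[1/P])=0$ and is therefore, thanks to square-freeness, an order-zero density carried by $Z_{\R}(P)$ (the poles of $1/P$ being simple), and because $\hat g$ vanishes on $Z_{\R}(P)$, the identity $P\cdot\widehat{\Phi g}=\hat g$ upgrades to $\widehat{\Phi g}=\hat g/P$ with no extra term supported on $Z_{\R}(P)$. Hence $\widehat{\Phi g}$ is entire of exponential type associated with $\bar\Omega$, so $\Phi g$ is supported in $\bar\Omega$; in particular $\Phi g=0$ on $\R^{n}\setminus\bar\Omega$, and by the continuity assumption (iv) also $\Phi g=0$ on $\Gamma$. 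Now $u:=\Phi g|_{\bar\Omega}$ lies in $\DD_{P}(\bar\Omega)$ by (iv), solves $P(-iD)u=0$ in $\Omega$, and vanishes on $\Gamma$; the Dirichlet uniqueness hypothesis (iii) gives $u\equiv 0$. Thus $\Phi g\equiv 0$ on $\R^{n}$, and applying $P(-iD)$ yields $g\,d\sigma=0$, i.e.\ $g=0$ a.e.\ on $\Gamma$.

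I expect the main obstacle to be the complex-analytic step, and within it the control of the exponential type of $\hat g/P$ after division, which is precisely where the constant leading coefficient in (i) must be used; the passage from the real zeros $Z_{\R}(P)$ to the complex variety $Z(P)$, relying on (ii) and a uniqueness theorem for holomorphic functions on a totally real submanifold, is the other delicate point. One must also be careful, as noted above, that no distribution supported on $Z_{\R}(P)$ pollutes the identity $\widehat{\Phi g}=\hat g/P$; this is where the square-free hypothesis and the vanishing of $\hat g$ on $Z_{\R}(P)$ re-enter.
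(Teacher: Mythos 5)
Your proposal is correct and follows essentially the same path as the paper: Zariski density of the real points of each component (via the smooth real point in (ii)) to upgrade the vanishing of $\hat g$ from $Z_{\R}(P)$ to $Z(P)$, square-freeness to make $\hat g/P$ entire, the constant leading coefficient in $X_{1}$ to control the exponential type, Paley--Wiener to localize $\supp\Phi g$ in $\bar\Omega$, and then continuity plus Dirichlet uniqueness to kill $\Phi g$ and hence $g$. The only point where you take a detour is the identification $\FF(\Phi g)=\hat g/P$: the paper avoids any discussion of possible extra terms carried by $Z_{\R}(P)$ by writing $\FF(\Phi g)=\FF(E)\hat g=(\hat g/P)\bigl(P\,\FF(E)\bigr)=\hat g/P$, using only associativity of the product of a distribution with smooth functions, which is cleaner than your argument via $\FF(E)-[1/P]$.
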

The spaces of functions $\DD_{P}(\bar\Omega)$ and $\GG(\Gamma)$ in the above statement will be chosen in accordance with the domain $\Omega$ and the polynomial $P$, see Section \ref{Appli} for details.

Applying the Theorem \ref{main} in specific cases, we derive general Heisenberg uniqueness pairs as described next. 
\begin{theorem}\label{HUP1}
The following holds true: 
\\[.5\baselineskip]
i) Let $\Gamma$ be the boundary of any bounded convex domain in $\R^{n}$ of class $C^{1}$. Let $S_{n-1}(c_{1})$ be the sphere of dimension $n-1$ and radius $c_{1}$. Then $(\Gamma,S_{n-1}(c_{1}))$ is a HUP for $L^{p}(\Gamma)$, $p>n-1$, if and only if $c_{1}^{2}$ is not an eigenvalue of $-\Delta$ on $\Omega$.  
\\[.5\baselineskip]
ii) Let $\Gamma$ be the rectangle with vertices $(0,0), (T,0), (T,L), (0,L)$, $T,L>0$, and let $\PP$ be the parabola $X_{1}+X_{2}^{2}=0$ in $\R^{2}$.
The pair $(\Gamma, \PP)$ is a HUP
for $C^{2}(\Gamma)$.
\\[.5\baselineskip]
iii) Let $\Gamma$ be the rectangle as in ii), and assume that $T/L\notin\Q$. Let $\Delta_{+}$ and $\Delta_{-}$ be the two lines 
$\{X_{1}= X_{2}\}$, $\{X_{1}=-X_{2}\}$ in $\R^{2}$.
Then the pair 
$(\Gamma, \Delta_{+}\cup\Delta_{-})$ is a HUP
for $C^{1}(\Gamma)$.
\\[.5\baselineskip]
iv) Let $\T$ denote the unit circle, and let $\Delta_{1}, \Delta_{2}$ be two lines 
through the origin. 
Then the pair $(\T,\Delta_{1}\cup\Delta_{2})$
is a HUP for $L^{1}(\T)$ if and only if $\Delta_{1}$ and $\Delta_{2}$ make an angle of $\pi\rho$, with $\rho$ an irrational number.
\end{theorem}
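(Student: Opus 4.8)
The plan is to derive all four statements from Theorem~\ref{main} by choosing, in each case, a real polynomial $P$ whose real zero set $Z_\R(P)$ is exactly the prescribed set $\Lambda$, and then checking the hypotheses (i)--(iv). In part i) I take $P(X)=X_1^2+\cdots+X_n^2-c_1^2$, so that $Z_\R(P)=S_{n-1}(c_1)$ and $P(-iD)=-\Delta-c_1^2$ is the Helmholtz operator. In part iii) the two lines are the real zero locus of $P=X_1^2-X_2^2=(X_1-X_2)(X_1+X_2)$, whose operator $P(-iD)=\partial_{X_2}^2-\partial_{X_1}^2$ is the one-dimensional wave operator. In part ii) I take $P=X_1+X_2^2$, so that $Z_\R(P)=\PP$ and $P(-iD)=-i\partial_{X_1}-\partial_{X_2}^2$ is of Schr\"odinger type. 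In part iv) I write $\Delta_1\cup\Delta_2$ as the zero set of a product $P=L_1L_2$ of the two real linear forms defining them, so that $P(-iD)=L_1(-iD)L_2(-iD)$ factors as a product of two directional derivatives. In each case hypotheses (i) and (ii) are immediate, since $P$ is square-free with a constant leading coefficient in a suitable variable (in iv) one may need a rotation of coordinates, which preserves $\T$), its irreducible components are cut out by real polynomials, and each contains a smooth real point. Hypothesis (iv), the continuity of the single layer potential $\Phi g$ in the stated space ($L^p$ with $p>n-1$ in i), and $C^2$, $C^1$, $L^1$ for the more singular kernels of ii)--iv)), is a standard mapping property that I would record in Section~\ref{Prelim}.

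The real content is thus hypothesis (iii), the uniqueness of the trivial solution to the associated homogeneous Dirichlet (or initial--boundary value) problem, and this is exactly where each dichotomy originates. In part i), the problem $(-\Delta-c_1^2)u=0$ in $\Omega$, $u|_\Gamma=0$, admits only $u\equiv0$ precisely when $c_1^2$ is not a Dirichlet eigenvalue of $-\Delta$ on $\Omega$, and Theorem~\ref{main} then delivers the HUP in that case. In part iii), I expand a solution $u$ vanishing on $\Gamma$ in the spatial Dirichlet basis $\sin(k\pi X_1/T)$; the wave equation forces each coefficient to be a multiple of $\sin(k\pi X_2/T)$, and the remaining boundary condition on the side $X_2=L$ reads $\sin(k\pi L/T)=0$ for every active mode. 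Since $T/L\notin\Q$ makes $\sin(k\pi L/T)\neq0$ for all $k\geq1$, all coefficients vanish and $u\equiv0$. In part ii), uniqueness for the Schr\"odinger-type problem on the rectangle follows from a standard energy (or Holmgren) argument for the evolution once the data are prescribed on the whole boundary, the variable $X_1$ playing the role of time.

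Part iv) is where I expect the principal difficulty, and it is the only part requiring genuinely new input. The operator $L_1(-iD)L_2(-iD)$ is hyperbolic with the two line directions as characteristics, and a solution of the homogeneous equation is a superposition of a function constant along the chords parallel to $\Delta_1$ and one constant along the chords parallel to $\Delta_2$. Tracing a characteristic chord across the unit disk, reaching the boundary, and switching to the other characteristic family defines a self-map of the circle $\T$ whose rotation number is $\rho$, the normalized angle between the lines. When $\rho$ is irrational, Denjoy's theorem shows this map is topologically conjugate to the irrational rotation by $\rho$, so every orbit is dense in $\T$; propagating the vanishing of the Dirichlet data along characteristics and using this density forces $u$, hence the jump data $g$, to vanish, which yields uniqueness and the HUP through Theorem~\ref{main}. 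The delicate step---and the crux of the whole theorem---is to upgrade orbit density into the genuine vanishing of $g$ in $L^1(\T)$, rather than merely on a dense subset, which requires transporting the vanishing with enough regularity and then invoking continuity and density.

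Finally, the converse (``only if'') halves of i) and iv) are obtained by explicit construction. If $c_1^2$ is a Dirichlet eigenvalue with eigenfunction $u$, then, since $e^{-i\lambda\cdot x}$ also solves $(-\Delta-c_1^2)v=0$ for every $\lambda$ with $|\lambda|=c_1$, Green's second identity gives
\[
\int_\Gamma e^{-i\lambda\cdot x}\,\partial_\nu u\,d\sigma
=\int_\Omega\bigl(e^{-i\lambda\cdot x}\Delta u-u\,\Delta e^{-i\lambda\cdot x}\bigr)\,dx=0,
\]
so $g=\partial_\nu u|_\Gamma$ is nonzero (by unique continuation, as $u$ would otherwise have vanishing Cauchy data) and satisfies $\hat g\equiv0$ on $S_{n-1}(c_1)$; hence the pair is not an HUP. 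For iv), when $\rho$ is rational the periodic characteristic orbits produce, in the same manner, a nonzero $g\in L^1(\T)$ annihilated on $\Delta_1\cup\Delta_2$. This reduces the entire theorem to the uniqueness statements above, with the Denjoy analysis of part iv) as the principal obstacle.
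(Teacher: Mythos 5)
Your treatment of parts i)--iii) follows essentially the paper's strategy: choose $P$ so that $Z_{\R}(P)$ is the prescribed set, reduce to uniqueness of the homogeneous Dirichlet (or initial--boundary value) problem, and for the converse of i) exhibit $g=\partial_{\nu}u$ for a Dirichlet eigenfunction $u$. Your Green's-identity computation for that converse is in fact cleaner than the paper's (which instead uses the layer-potential representation $u=\Phi(\partial_{\nu}u)$ and the identity $P(\lambda)\FF(u)=\hat v$), and your sine-series argument for iii) is just the proof of the Dunninger--Zachmanoglou theorem that the paper cites. You do, however, dismiss hypothesis (iv) of Theorem \ref{main} as ``a standard mapping property'': for the Helmholtz kernel this requires the weakly-singular-kernel continuity theorem the paper proves in its appendix (hence the restriction $p>n-1$), and for the Schr\"odinger kernel $t^{-1/2}e^{ix^{2}/4t}$ the matching of the one-sided limits across $t=0$ and $t=T$ is a genuine stationary-phase computation, not a routine estimate; these need to be supplied.

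The real gap is in part iv), and you have correctly located it yourself without closing it. Your plan --- take $P=L_{1}L_{2}$, invoke Theorem \ref{main}, and prove uniqueness for the degenerate hyperbolic Dirichlet problem on the disk by propagating zero boundary data along characteristic chords --- stalls twice. First, Theorem \ref{main} is not applicable here: for $g\in L^{1}(\T)$ the potential $\Phi g$ is not continuous (the paper says so explicitly and only reuses the Paley--Wiener half of the argument), and uniqueness for the Dirichlet problem for a wave-type operator on a \emph{disk} in a low-regularity class is itself a delicate classical problem, not an input you can assume. Second, and decisively, ``orbit density forces $g=0$'' is false for $L^{1}$ functions, which are only defined almost everywhere; density of a single orbit gives nothing. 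The paper circumvents both issues by working with the two first-order factors separately: computing $\Phi g$ explicitly for $\partial_{y}$ and for $\partial_{x}-(\cot\rho)\partial_{y}$ yields the exact reflection identities $g(e^{-i\theta})=-g(e^{i\theta})$ and $g(e^{i(\rho-\theta)})=-g(e^{i(\rho+\theta)})$ a.e., whose composition says $g$ is invariant under the rotation by $2\rho$; the passage to $g$ constant is then done through uniqueness of Fourier coefficients ($a_{n}=a_{n}e^{2in\rho}$), i.e.\ the ergodicity of the irrational rotation on $L^{1}$, not through topological density. Note also that for the circle the chord-switching map \emph{is} a rotation, so Denjoy's theorem is superfluous there; it is needed only for Theorem \ref{HUP2}, where the curve is a general strictly convex $C^{2}$ curve and the map is merely a $C^{2}$ circle diffeomorphism. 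Finally, your converse for iv) should be made explicit: the paper exhibits $g(e^{i\theta})=e^{in\theta}-e^{-in\theta}$ when $\rho/\pi=k/n$ and verifies $\hat g=0$ on the two lines via the compact support of the two potentials.
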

We note that the HUP in item iv) was obtained previously, in \cite{SJ1}, by a different method. Also, making use of the property (\ref{rem-T}), item iii) may be rephrased as the fact that the pair $(\CC,\Delta_{\rho}\cup\Delta_{-\rho})$ is a HUP, where $\CC$ is the rectangle of sides $\pi$ and 1, and 
$\Delta_{\pm\rho}=\{X_{2}=\pm\pi\rho X_{1}\}$, with $\rho\not\in\Q$. This shows some relation between iii) and iv).

Acually, item iv) can be extended in the following way.
\begin{theorem}\label{HUP2}
Let $\Gamma$ be any strictly convex planar curve of class $C^{2}$. There exists an angle $\rho$ between two lines $\Delta_{1}, \Delta_{2}$ through the origin, such that the pair 
$(\Gamma,\Delta_{1}\cup\Delta_{2})$ is a HUP for $L^{1}(\Gamma)$. 
\end{theorem}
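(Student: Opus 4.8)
The plan is to apply Theorem \ref{main} with a polynomial $P$ whose real zero set is the required union $\Delta_1\cup\Delta_2$ of two lines through the origin, and with $\GG(\Gamma)=L^1(\Gamma)$. Writing $\Delta_i=\{\ell_i=0\}$ for real linear forms $\ell_1,\ell_2$, take $P=\ell_1\ell_2$, a square-free quadratic whose two irreducible components are the complexified lines; after a linear change of coordinates (legitimate by property (\ref{rem-T})) we may assume the $X_1^2$ coefficient is a nonzero constant, so hypotheses (i) and (ii) hold. Hypothesis (iv) --- continuity of the single layer potential $\Phi g$ for $g\in L^1(\Gamma)$ and its membership in the chosen space $\DD_P(\bar\Omega)$ --- follows from local integrability of the fundamental solution of the hyperbolic operator $P(-iD)$ together with the single-layer estimates of Section \ref{Prelim}. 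The content of the theorem thus reduces to verifying hypothesis (iii), uniqueness of the homogeneous Dirichlet problem for $P(-iD)$ on the convex domain $\Omega$ bounded by $\Gamma$, for a suitable choice of the angle between $\Delta_1$ and $\Delta_2$.

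To handle (iii), observe that $P(-iD)$ is, up to a constant, the product of the two directional derivatives along the characteristic directions dual to $\ell_1,\ell_2$, so every solution of $P(-iD)u=0$ on the simply connected $\Omega$ has the d'Alembert form $u=f_1(\xi_1)+f_2(\xi_2)$, where $\xi_i$ is the affine coordinate that is constant along the $i$-th family of characteristic chords. Since $\Gamma$ is strictly convex, each such chord meets $\Gamma$ in exactly two points, defining an orientation-reversing $C^1$ involution $\tau_i\colon\Gamma\to\Gamma$ that preserves $\xi_i$. Evaluating the boundary condition $f_1(\xi_1)+f_2(\xi_2)=0$ at a point $p$ and at $\tau_1(p)$ (same value of $\xi_1$) shows that the continuous function $h:=f_2\circ\xi_2$ satisfies $h\circ\tau_1=h$, while $h\circ\tau_2=h$ holds trivially; hence $h$ is invariant under the orientation-preserving circle diffeomorphism $\Psi:=\tau_2\circ\tau_1$. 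If $\Psi$ has irrational rotation number then, by Denjoy's theorem, $\Psi$ is topologically conjugate to the corresponding rotation and all its orbits are dense, forcing the continuous $\Psi$-invariant function $h$ to be constant; the boundary condition then gives $u\equiv0$ on $\Gamma$ and, through the d'Alembert form, on all of $\Omega$. Thus (iii) holds as soon as the angle is chosen so that $\Psi$ has irrational rotation number.

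It remains to produce such an angle, which is the heart of the matter. Fixing the first direction and rotating the second so that the angle $\alpha$ between the lines runs over $(0,\pi)$, the map $\Psi_\alpha=\tau_{2,\alpha}\circ\tau_1$ depends continuously on $\alpha$, so its rotation number $r(\alpha)$ is continuous. As $\alpha\to0^+$ the two directions coincide and $\Psi_\alpha\to\mathrm{id}$, so $r(0^+)=0$; as $\alpha\to\pi^-$ the two lines again coincide and $\Psi_\alpha\to\mathrm{id}$, but the continuously tracked lift has by then advanced one full turn, so $r(\pi^-)=1$. This endpoint count is a homotopy invariant: deforming $\Gamma$ to a circle through strictly convex $C^2$ curves (convex combinations of support functions keep $h+h''>0$), one computes directly for the circle that $\Psi_\alpha$ is the rigid rotation by $2\alpha$, with $r(\alpha)=\alpha/\pi$ running from $0$ to $1$, and the integer jump $r(\pi^-)-r(0^+)=1$ persists along the deformation. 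By the intermediate value theorem $r$ attains every value in $(0,1)$, in particular irrational ones; choosing $\alpha$ accordingly makes $\Psi_\alpha$ minimal and completes the verification of (iii), whence Theorem \ref{main} yields the claim.

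The main obstacle I anticipate lives entirely in the last two steps. First, one must secure precisely the regularity Denjoy's theorem requires: for strictly convex $\Gamma\in C^2$ the chord involutions are $C^1$ (indeed $C^2$ away from the finitely many chords tangent to $\Gamma$), and one must check that $\Psi'$ has bounded variation so that the $C^1$-with-bounded-variation form of Denjoy's theorem applies, with special care near the fixed points of $\tau_i$, where the tangency produces a fold. Second, the endpoint identity $r(\pi^-)-r(0^+)=1$ for an arbitrary strictly convex $C^2$ curve must be made rigorous, either through the homotopy to the circle or via a direct monotonicity argument showing that rotating the second direction pushes every point of $\Gamma$ monotonically the same way, so that $r$ is monotone and non-constant. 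Once these two points are settled, the theorem follows at once.
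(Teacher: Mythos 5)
Your PDE framework is genuinely different from the paper's, and that is where the gaps lie. The paper does \emph{not} run Theorem \ref{main} with the product polynomial $\ell_1\ell_2$: it works with the two first-order transport operators $\partial_y$ and $\partial_x-(\cot\rho)\partial_y$ separately, explicitly noting that Theorem \ref{main} cannot be applied as a black box because $\Phi g$ need not be continuous for $g\in L^1(\Gamma)$. Only the Paley--Wiener half of that proof is reused, once per factor, to show that each first-order potential vanishes on one side of $\Gamma$; this yields the functional equations $g\circ s_0=-g$ and $g\circ s_\rho=-g$ a.e.\ directly for $g$ itself, with no Dirichlet problem and no regularity of $\Phi g$ needed. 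In your version the appeal to Theorem \ref{single-layer-cont} to secure hypothesis (iv) for $g\in L^1$ fails on two counts: that theorem assumes the kernel is continuous off the origin (the wave kernel $H(t)H(t-x)H(t+x)$ is not), and its condition $p>1+\nu/(n-1-\nu)=1$ excludes $p=1$. You would have to prove directly that $\Phi g(t,x)=\int_{\Gamma_{t,x}}g\,d\sigma$ is continuous for $L^1$ densities and, more seriously, that a merely continuous distributional solution of the wave equation on $\Omega$ admits a d'Alembert decomposition $f_1(\xi_1)+f_2(\xi_2)$ with $f_1,f_2$ continuous up to the boundary, including at the four points where $\Gamma$ is tangent to a characteristic direction. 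None of this is addressed, and it is precisely why the paper's own use of the wave operator (item iii) of Theorem \ref{HUP1}) is restricted to $C^1(\Gamma)$ data on a rectangle.

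The dynamical core of your argument --- the composition $\Psi=\tau_2\circ\tau_1$ of the chord involutions, continuity of the rotation number, and an intermediate value argument producing an irrational value --- coincides with the paper's, but your existence step has a gap where the paper's does not. The endpoint identity $r(\pi^-)=1$, justified by a homotopy to the circle along which ``the integer jump persists,'' is exactly what needs proof and is not supplied. The paper avoids it: at $\rho=0$ the composition is the identity (rotation number $0$), and at the angle $\rho_1$ for which the second chord direction joins the two fixed points $M_1,M_2$ of $\tau_1$, the point $M_1$ is $2$-periodic for $\Psi$, so the rotation number equals $1/2$; Lemma \ref{cont-rot} and the intermediate value theorem then give an irrational value in $(0,\rho_1)$. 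You should adopt that step. Two smaller remarks: your worry about the bounded-variation hypothesis in Denjoy's theorem is unnecessary for your route, since a continuous function invariant under a circle homeomorphism with irrational rotation number is automatically constant (every omega-limit set is the unique minimal set), so Poincar\'e theory suffices --- Denjoy is needed in the paper only because the invariant object there is $g\in L^1$ itself rather than a continuous auxiliary function. Conversely, if the regularity issues above were resolved, your route would sidestep a delicacy in the paper's own proof (composing an $L^1$ function with a conjugating homeomorphism that need not be absolutely continuous); but as written the proposal does not close its own gaps.
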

Unfortunately, we were unable to characterize the angles $\rho$, in terms of $\Gamma$, for which the above pair is a HUP.
It may possibly be an interesting problem.

In Section \ref{Prelim} we recall or state several preliminary results that will be useful for our study. In Section \ref{Proof}, we give the proof of Theorem \ref{main}. In Section \ref{Appli}, we apply our result in the cases of an elliptic equation (the Helmholtz equation), the Schr\"odinger equation, a hyperbolic equation (the wave equation), and finally, transport equations. For each of these equations, we derive the corresponding Heisenberg uniqueness pairs.
\section{Preliminaries}\label{Prelim}
We start with recalling a few details about the notion of a potential of a single layer, as defined in (\ref{def-Phi}). The following can be found in \cite[Chap.\ 2, \S 3]{DL}, in the case of the Newton potential.
In the sequel, $\vphi$ denotes a test function. For $g\in L^{1}(\Gamma)$, the Radon measure $gd\sigma$ is the distribution, supported on $\Gamma$,
defined by
$$
<gd\sigma,\vphi>=\int_{\Gamma}g(m)\vphi(m)d\sigma(m).
$$
The potential of a single layer $\Phi g$, associated to $g$, is then defined as the convolution of distributions,
$$
\Phi g=E*gd\sigma,
$$
where $E$ denotes a fundamental solution of the differential operator $\PP=P(-iD)$. Note that the convolution is well defined as the measure $gd\sigma$ has compact support.

By the classical result of Malgrange and Ehrenpreis,
a fundamental solution $E$ of the linear differential operator $P(-iD)$ with constant coefficients, always exists, satisfying
\begin{equation}\label{sol-fund}
P(-iD)E=\delta.
\end{equation}
With $\check E$ denoting the distribution symmetric to $E$, defined by $<\check E,\vphi>=<E,\check\vphi>$, $\check\vphi(x)=\vphi(-x)$, and from basic properties of the convolution, we get
\begin{equation}\label{eq-Tg}
<\Phi g,\vphi>=<gd\sigma,\check E*\vphi>=\int_{\Gamma}(\check E*\vphi)gd\sigma
=\int_{\Gamma}<\check E(y),\vphi(x-y)>g(x)d\sigma(x).
\end{equation}
Now, with $\PP^{*}$, the adjoint of the differential operator $\PP$, we have
\begin{align*}
<\PP(\Phi g),\vphi> & =<\Phi g,\PP^{*}\vphi>=\int_{\Gamma}(\check E*\PP^{*}\vphi)gd\sigma
=\int_{\Gamma}(\PP^{*}\check E*\vphi)gd\sigma
\\[5pt]
& =\int_{\Gamma}(\check{\PP E}*\vphi)gd\sigma=\int_{\Gamma}(\delta*\vphi)gd\sigma
=\int_{\Gamma}\vphi gd\sigma=<gd\sigma,\vphi>,
\end{align*}
and thus $\PP(\Phi g)=gd\sigma$, supported on $\Gamma$. Hence
\begin{equation}\label{P-Phi=0}
\PP(\Phi g)|_{\R^{n}\setminus\Gamma}=0.
\end{equation}
In the case when the fundamental solution $E$ is a locally integrable function, 
one gets, from (\ref{eq-Tg}),
$$
<\Phi g,\vphi>=\int_{x\in\Gamma}\int_{y\in\R^{n}}\check E(x-y)\vphi(y)dyg(x)d\sigma(x)
=<\int_{x\in\Gamma}E(y-x)g(x)d\sigma(x),\vphi(y)>
$$ 
i.e. $\Phi g$ is just the function
$$
\Phi g(y)=\int_{\Gamma}E(y-x)g(x)d\sigma(x).
$$

Next, we recall a version of the Paley-Wiener theorem that will be useful to us, see \cite[Theorem 7.3.1]{H}.
\begin{theorem}
[Paley-Wiener]\label{PW}
Let $K$ be a convex compact subset of $\R^{n}$ with supporting function $H$. Every entire function $u$ in $\C^{n}$ satisfying, for some positive integer $N$,  an estimate
$$
|u(\zeta)| \leqq C(1+|\zeta|)^{N} e^{H(\operatorname{lm} \zeta)}, \quad \zeta \in \mathbb{C}^{n},
$$
is the Fourier transform of a distribution with support contained in $K$.
\end{theorem}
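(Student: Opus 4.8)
The plan is to prove this classical Paley--Wiener--Schwartz statement by exhibiting the distribution explicitly as the inverse Fourier transform of the restriction $u|_{\R^{n}}$, and then showing that its support lies in $K$ by a contour-shifting argument controlled by the supporting function. Since $H$ is the supporting function of a compact set, $H(0)=0$, so on the real axis the hypothesis gives $|u(\xi)|\le C(1+|\xi|)^{N}$ for $\xi\in\R^{n}$; thus $u|_{\R^{n}}$ is a continuous function of polynomial growth, hence a tempered distribution, and I set $v:=\mathcal F^{-1}(u|_{\R^{n}})\in\mathcal S'(\R^{n})$. It then remains to prove (a) $\supp v\subset K$, and (b) that $u$ coincides with the Fourier transform $\zeta\mapsto\langle v,e^{-i\langle\,\cdot\,,\zeta\rangle}\rangle$ of $v$ on all of $\C^{n}$.

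First I would mollify in order to gain decay. Fix $\psi\in C_{c}^{\infty}(\R^{n})$ with $\psi\ge0$, $\int\psi=1$, $\supp\psi\subset\overline{B}(0,1)$, and put $\psi_{\eps}(x)=\eps^{-n}\psi(x/\eps)$. By the elementary Paley--Wiener theorem for smooth compactly supported functions, its Fourier transform $\widehat{\psi_{\eps}}$ is entire and satisfies, for every $M$, a bound $|\widehat{\psi_{\eps}}(\zeta)|\le C_{M}(1+|\zeta|)^{-M}e^{\eps|\mathrm{Im}\,\zeta|}$. Consequently $u_{\eps}:=u\,\widehat{\psi_{\eps}}$ is entire and, for $M>N+n+1$, integrable on $\R^{n}$ with the bound $|u_{\eps}(\zeta)|\le C'_{M}(1+|\zeta|)^{N-M}e^{H(\mathrm{Im}\,\zeta)+\eps|\mathrm{Im}\,\zeta|}$; here $H(\,\cdot\,)+\eps|\,\cdot\,|$ is precisely the supporting function of the $\eps$-neighbourhood $K_{\eps}:=K+\eps\overline{B}(0,1)$. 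I then define the continuous function $v_{\eps}(x)=(2\pi)^{-n}\int_{\R^{n}}u_{\eps}(\xi)e^{i\langle x,\xi\rangle}\,d\xi$, which by the convolution theorem equals $v*\psi_{\eps}$.

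The heart of the argument is to show $\supp v_{\eps}\subset K_{\eps}$. Using the rapid decay of $u_{\eps}$ and Cauchy's theorem in each complex variable, I would shift the contour of integration from $\R^{n}$ to $\R^{n}+i\eta$, for an arbitrary fixed $\eta\in\R^{n}$, obtaining $v_{\eps}(x)=(2\pi)^{-n}e^{-\langle x,\eta\rangle}\int_{\R^{n}}u_{\eps}(\xi+i\eta)e^{i\langle x,\xi\rangle}\,d\xi$ and hence the estimate $|v_{\eps}(x)|\le C''_{M}\exp\!\bigl(H(\eta)+\eps|\eta|-\langle x,\eta\rangle\bigr)$, valid for every $\eta$ (the $\xi$-integral being bounded uniformly in $\eta$ once $M-N>n$). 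If $x\notin K_{\eps}$, the separating-hyperplane characterization of convex bodies furnishes a direction $\eta$ with $\langle x,\eta\rangle>H(\eta)+\eps|\eta|$; replacing $\eta$ by $t\eta$ and using the positive homogeneity of $H$ and of $|\,\cdot\,|$, the right-hand side tends to $0$ as $t\to\infty$, forcing $v_{\eps}(x)=0$. Thus $\supp v_{\eps}\subset K_{\eps}$.

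Finally I would remove the mollification and identify $u$. Since $\psi_{\eps}\to\delta$, we have $v_{\eps}=v*\psi_{\eps}\to v$ in $\mathcal S'$; and for any test function $\vphi$ whose support is disjoint from the compact set $K$ one has $\mathrm{dist}(\supp\vphi,K)>0$, so $\supp\vphi\cap K_{\eps}=\emptyset$ for all small $\eps$, whence $\langle v,\vphi\rangle=\lim_{\eps}\langle v_{\eps},\vphi\rangle=0$; this yields $\supp v\subset K$. In particular $v$ has compact support, so its Fourier transform $\zeta\mapsto\langle v,e^{-i\langle\,\cdot\,,\zeta\rangle}\rangle$ is entire on $\C^{n}$ and agrees with $u$ on $\R^{n}$ by the very definition of $v$; by the identity theorem for holomorphic functions of several variables the two coincide on all of $\C^{n}$, so $u$ is the Fourier transform of the compactly supported distribution $v$, as claimed. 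The main obstacle is the rigorous justification of the contour shift: one must verify that the integrals over the faces at infinity of the shifted polydisc vanish, which is exactly what the mollification-induced decay $(1+|\zeta|)^{N-M}$ guarantees once $M$ is taken large enough.
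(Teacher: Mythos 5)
Your proof is correct. Note, however, that the paper does not prove this statement at all: it is quoted verbatim from H\"ormander (Theorem 7.3.1 of \cite{H}) as a known tool, so there is no in-paper argument to compare against. What you have written is essentially the standard (indeed, H\"ormander's own) proof of the Paley--Wiener--Schwartz theorem: define $v=\FF^{-1}(u|_{\R^{n}})$ using the polynomial bound on the real axis (from $H(0)=0$), regularize so that $u\,\widehat{\psi_{\eps}}$ is integrable with the supporting function of $K+\eps\bar B(0,1)$, shift the contour to $\R^{n}+i\eta$, optimize over $t\eta$ using positive homogeneity of $H$ and the separating hyperplane theorem to kill $v*\psi_{\eps}$ off $K_{\eps}$, let $\eps\to0$, and finish by analytic continuation from $\R^{n}$ to $\C^{n}$. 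All the steps you flag as needing care (uniformity of the $\xi$-integral in $\eta$ once $M-N>n$, the vanishing of the lateral faces in the contour shift, $H_{K_{\eps}}=H+\eps|\cdot|$, and $v*\psi_{\eps}\to v$ in $\cS'$) are handled or correctly identified as routine; I see no gap.
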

We will also make use of a result about analytic functions in $\C^{n}$ vanishing on the real points of a variety, see e.g.\ \cite[Theorem 5.1]{SO}. Recall that a point of an 
affine variety $V \subset \mathbb{C}^n$ of dimension $d$, defined by a family of polynomials, is smooth if the Jacobian of the defining family has rank $n-d$ at this point. We first consider the case of an irreducible variety $V$.
\begin{proposition}\label{Sottile}
Let $V \subset \mathbb{C}^n$ be an irreducible 
affine variety of dimension $d$, defined by a family of $n-d$ polynomials $P_{1},\ldots,P_{n-d}\in\R[X]$. 
Assume $V$ has a smooth real point $a\in V_{\R}$.
Then $V_{\mathbb{R}}$ is Zariski dense in $V$, that is any function $f$ analytic in 
$V$, which vanishes on $V_{\R}$, must vanish on $V$.
\end{proposition}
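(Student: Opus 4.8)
The plan is to work locally near the smooth real point $a$, where $V$ is a holomorphic graph that respects the real structure, transfer the vanishing of $f$ to a holomorphic function on a polydisc, and then spread the vanishing over all of $V$ by irreducibility.

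First I would use the smooth real point to build a local holomorphic parametrization compatible with complex conjugation. Since the Jacobian of $(P_{1},\ldots,P_{n-d})$ has rank $n-d$ at $a$, after relabeling the coordinates we may assume that the $(n-d)\times(n-d)$ minor taken with respect to the last $n-d$ variables is nonzero at $a$. Writing $X=(X',X'')$ with $X'\in\C^{d}$ and $X''\in\C^{n-d}$, the holomorphic implicit function theorem yields a connected neighborhood $U$ (a polydisc, say) of $a':=(a_{1},\ldots,a_{d})$ in $\C^{d}$ and a holomorphic map $h\colon U\to\C^{n-d}$ with $h(a')=a''$ such that $\phi(x'):=(x',h(x'))$ parametrizes $V$ near $a$, with image an open neighborhood of $a$ in the smooth locus $V_{\mathrm{sm}}$. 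Because the $P_{i}$ have real coefficients and $a$ is real, the same minor is a real invertible matrix at $a'$, so the real implicit function theorem produces a real-analytic local solution which, by uniqueness of the implicit solution, agrees with $h$ on $U\cap\R^{d}$. Hence $h$ sends real points to real points, and $\phi(U\cap\R^{d})\subset V_{\R}$.

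Next I would pull back $f$. The composition $g:=f\circ\phi$ is holomorphic on the connected set $U$, and since $f$ vanishes on $V_{\R}$ we get $g=0$ on the open subset $U\cap\R^{d}$. Now $\R^{d}$ is a maximal totally real subspace of $\C^{d}$: a holomorphic $g$ vanishing on a real neighborhood of $a'$ has all its real partial derivatives zero there, and by the Cauchy--Riemann equations these coincide with the Taylor coefficients of $g$ at $a'$, so $g$ vanishes near $a'$ and therefore identically on the connected $U$. Thus $f$ vanishes on the open neighborhood $\phi(U)$ of $a$ in $V_{\mathrm{sm}}$.

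Finally I would propagate this local vanishing to all of $V$ using irreducibility. Since $V$ is irreducible, its singular locus is a proper subvariety and $V_{\mathrm{sm}}$ is a connected complex manifold; an analytic function vanishing on the nonempty open set $\phi(U)\subset V_{\mathrm{sm}}$ must vanish on all of $V_{\mathrm{sm}}$ by the identity theorem, and then on $V=\overline{V_{\mathrm{sm}}}$ by continuity. This gives $f\equiv0$ on $V$, which is precisely the Zariski density of $V_{\R}$. I expect the main obstacle to be the first step: the implicit function theorem supplies a holomorphic parametrization automatically, but one must exploit the reality of both the coefficients of the $P_{i}$ and the point $a$ to guarantee that real parameters land in $V_{\R}$; without this compatibility the hypothesis that $f$ vanishes on $V_{\R}$ could not feed into the holomorphic rigidity argument. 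The propagation step rests on the standard but nontrivial fact that the smooth locus of an irreducible complex variety is connected in the Euclidean topology.
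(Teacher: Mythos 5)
Your proposal is correct and follows essentially the same route as the paper: apply the implicit function theorem at the smooth real point, use the reality of the coefficients and of $a$ to see that the same implicit functions parametrize $V_{\R}$ locally, conclude that the pulled-back function vanishes on a totally real slice and hence on a polydisc, and finally spread the vanishing over all of $V$ by the identity principle on the irreducible variety (the paper cites Chirka for this last step, which is the same connectedness-of-the-smooth-locus fact you invoke).
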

\begin{proof}
Consider the smooth point $a\in V$, and assume, for simplicity, that the first minor $(\p_{z_{j}} P_{i}(a))_{i,j=1,\ldots,n-d}$ of the Jacobian  is non singular. 
By the implicit function theorem, there exists a small ball $\BB_{a}$ in $\C^{d}$, centered at $(a_{n-d+1},\ldots,a_{n})$, a neighborhood $W$ of $(a_{1},\ldots,a_{n})$, and $n-d$ analytic functions 
$$h_{j}(z_{n-d+1},\ldots,z_{n}),\qquad
j=1,\ldots,n-d,
$$ 
defined in $\BB_{a}$ such that,
\begin{align}
(z_{1},\ldots,z_{n})\in W\quad & \text{ and } \quad P_{i}(z_{1},\ldots,z_{n})=0,~ i=1,\ldots,n-d
\quad\iff \notag
\\[5pt]
\label{impl1}
 (z_{n-d+1},\ldots,z_{n})\in\BB_{a}\quad & \text{ and }\quad  z_{j}=h_{j}(z_{n-d+1},\ldots,z_{n}),~
j=1,\ldots,n-d.
\end{align}
Since the $P_{i}$'s have real coefficients, $a$ is also a regular point of $V_{\R}$.
Applying the implicit function theorem, this time in $\R^{n}$, we get, 
\begin{align}
(x_{1},\ldots,x_{n})\in W\cap\R^{n}\quad & \text{ and } \quad P_{i}(x_{1},\ldots,x_{n})=0,~ i=1,\ldots,n-d
\quad\iff \notag
\\[5pt]
\label{impl2}
 (x_{n-d+1},\ldots,x_{n})\in\BB_{a}\cap\R^{d}\quad & \text{ and }\quad  x_{j}=h_{j}(x_{n-d+1},\ldots,x_{n}),~
j=1,\ldots,n-d.
\end{align}
where the functions $h_{j}$ in (\ref{impl1}) and in (\ref{impl2}) are actually the same (and are real-valued when restricted to $\R^{d}$). 
Now, for $f$ analytic on $V$ and vanishing on $V_{\R}$, the function
$$
g(z_{n-d+1},\ldots,z_{n})=f(h_{1}(z_{n-d+1},\ldots,z_{n}),\ldots,h_{n-d}(z_{n-d+1},\ldots,z_{n}),z_{n-d+1},\ldots,z_{n})
$$
is analytic in $\BB_{a}$ and, because of (\ref{impl2}), vanishes on $\BB_{a,\R}$. Hence, $g$ vanishes on the whole of $\BB_{a}$. Equivalently, $f$ vanishes in a neighborhood (in the topology of $V$) of $a$. Finally, applying the identity principle for holomorphic functions on the irreducible algebraic set $V$, see \cite[Chapter 1, \S 5.3]{C}, $f$ must vanish on all of $V$. 
\end{proof}
The above result extends easily to the case of a non irreducible variety.
\begin{corollary}\label{cor-sot}
Let $V\subset\C^{n}$ be an affine variety of dimension $d$, and let $V=\cup V_{i}$ its decomposition into a finite number of irreducible components $V_{i}$. Assume each $V_{i}$ is defined by real polynomials, and contains a smooth real point $a_{i}\in V_{i,\R}$. Then, as in Proposition \ref{Sottile}, $V_{\R}$ is Zariski dense in $V$.
\end{corollary}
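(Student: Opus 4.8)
The plan is to reduce the corollary directly to Proposition \ref{Sottile}, working one irreducible component at a time. First I would fix an irreducible component $V_{i}$, of dimension $d_{i}$ say, and check that it satisfies the hypotheses of Proposition \ref{Sottile}: by assumption it is defined by real polynomials and contains a smooth real point $a_{i}\in V_{i,\R}$. The only point that requires attention is that Proposition \ref{Sottile} is stated for a variety cut out by exactly $n-d_{i}$ polynomials, whereas the defining family for $V_{i}$ may a priori be larger. Since smoothness of $a_{i}$ means precisely that the Jacobian of the defining family has rank $n-d_{i}$ at $a_{i}$, I can select $n-d_{i}$ of the defining polynomials whose Jacobian is nonsingular at $a_{i}$; these cut out $V_{i}$ in a neighborhood of $a_{i}$, so the hypotheses of Proposition \ref{Sottile} are met for $V_{i}$. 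Applying the proposition then gives that $V_{i,\R}$ is Zariski dense in $V_{i}$.

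Next I would globalize. Let $f$ be a function analytic on $V$ vanishing on $V_{\R}$. Its restriction $f|_{V_{i}}$ is analytic on $V_{i}$, and since $V_{i,\R}\subset V_{\R}$, it vanishes on $V_{i,\R}$. By the previous step (that is, by the conclusion of Proposition \ref{Sottile} applied to $V_{i}$), $f|_{V_{i}}$ vanishes identically on $V_{i}$. As the decomposition $V=\bigcup_{i}V_{i}$ is finite and this holds for every $i$, the function $f$ vanishes on all of $V$, which is exactly the statement that $V_{\R}$ is Zariski dense in $V$.

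The argument is therefore a componentwise application of the irreducible case, and the only genuinely delicate step is the local reduction to exactly $n-d_{i}$ defining equations for each component. I expect this to be routine rather than an obstacle: the existence of such a subfamily is precisely the content of the smoothness hypothesis, and once it is chosen, the implicit function theorem argument in the proof of Proposition \ref{Sottile} applies without change. No complex-analytic input beyond what is already contained in that proof is needed.
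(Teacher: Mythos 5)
Your proof is correct and follows essentially the same route as the paper, which simply notes that $V_{\R}=\cup V_{i,\R}$ and applies Proposition \ref{Sottile} to each component. Your extra care in reducing each $V_{i}$ to exactly $n-d_{i}$ local defining equations near the smooth point $a_{i}$ is a reasonable filling-in of a detail the paper leaves implicit, not a different argument.
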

\begin{proof}
It suffices to notice that $V_{\R}=\cup V_{i,\R}$ and to apply Proposition \ref{Sottile} to each of the components $V_{i}$.
\end{proof}
Another result about analytic functions in $\C^{n}$, useful to us, is the following lemma.
\begin{lemma}\label{lem-div}
Let $\Omega$ be an open connected subset of $\mathbb{C}^n$ and let $f$ be analytic in $\Omega$. Assume $P\in\mathbb{C}[z_1,\ldots,z_n]$ is a square-free polynomial.
If $f$ vanishes on the zero set $Z(P)$ of $P$, then $f/P$ is analytic in $\Omega$.
\end{lemma}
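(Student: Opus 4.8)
Since holomorphy is a local property and $f/P$ is obviously holomorphic on $\Omega\setminus Z(P)$ (where $P$ does not vanish), the plan is to prove that the quotient extends holomorphically across $Z(P)$. I would first write $P=P_{1}\cdots P_{k}$ as a product of pairwise non-associate irreducible factors; this is precisely what the square-free hypothesis guarantees. Accordingly $Z(P)=\bigcup_{i}Z(P_{i})$, and I would split this set into its smooth locus and its singular locus $\Sigma$, the latter being the union of the singular points of the individual hypersurfaces $Z(P_{i})$ together with the pairwise intersections $Z(P_{i})\cap Z(P_{j})$, $i\neq j$.

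The first step is to extend $f/P$ across the smooth locus. Let $a$ be a smooth point of $Z(P)$; then $a$ lies on exactly one component, say $Z(P_{1})$, with $P_{1}$ reduced and $\nabla P_{1}(a)\neq0$, while $P_{j}(a)\neq0$ for $j\geq2$. After a local biholomorphic change of coordinates furnished by the implicit function theorem, I may assume $Z(P_{1})=\{w_{1}=0\}$ near $a$; since $P_{1}$ is reduced it vanishes there to first order, so $P_{1}=w_{1}u$ with $u$ a non-vanishing holomorphic germ, and hence $P=w_{1}v$ with $v=u\,P_{2}\cdots P_{k}$ non-vanishing at $a$. Because $f$ vanishes on $Z(P)\supset\{w_{1}=0\}$, its Taylor expansion in $w_{1}$ has no constant term, so $f=w_{1}g$ with $g$ holomorphic near $a$. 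Therefore $f/P=g/v$ is holomorphic in a neighborhood of $a$, and $f/P$ is holomorphic on all of $\Omega\setminus\Sigma$.

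It remains to cross $\Sigma$. Each $Z(P_{i})$ is an irreducible hypersurface, so its singular locus has complex codimension at least $2$ in $\C^{n}$; likewise, two distinct irreducible hypersurfaces meet in a set of codimension at least $2$. Hence $\Sigma$ is an analytic subset of $\Omega$ of codimension $\geq2$. By the second Riemann extension theorem, a function holomorphic on the complement of an analytic set of codimension at least $2$ extends holomorphically across it; applying this to $f/P$ yields a holomorphic function on $\Omega$ coinciding with $f/P$ wherever $P\neq0$, which is the assertion.

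The decisive point, and the only place where the hypotheses are really used, is the local normal form at smooth points: the square-free (reduced) assumption forces $P$ to vanish to exactly first order along each branch of $Z(P)$, matching the first-order vanishing of $f$, so that the quotient stays bounded; without it (e.g.\ $P=z_{1}^{2}$, $f=z_{1}$) the extension would fail. The codimension estimate for $\Sigma$ and the removable-singularity step are then standard.
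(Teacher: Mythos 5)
Your proof is correct, but it takes a genuinely different route from the paper's. The paper disposes of the lemma in two lines by invoking the local analytic Nullstellensatz (Gunning--Rossi): since $P$ is square-free it generates a radical ideal in each local ring of germs $\mathcal{O}_z$, so the hypothesis $f|_{Z(P)}=0$ yields $f=g_zP$ in a neighborhood of every $z\in\Omega$, and the local quotients $g_z$ glue into a global one by the identity principle. You avoid the Nullstellensatz entirely: you establish divisibility by hand at the smooth points of $Z(P)$ (implicit function theorem, first-order vanishing of a reduced equation, divisibility by the coordinate $w_1$ of any holomorphic function vanishing on $\{w_1=0\}$), and you then cross the remaining bad set $\Sigma$ --- the singular loci of the components together with their pairwise intersections, an analytic subset of codimension $\geq 2$ --- by the second Riemann extension theorem. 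Both arguments are sound. The paper's is shorter but leans on a deeper black box and leaves implicit precisely the point where square-freeness enters (radicality of $(P)\mathcal{O}_z$); yours is longer but uses only elementary local theory plus two standard codimension facts (the singular locus of an irreducible hypersurface, and the intersection of two distinct irreducible hypersurfaces, both have codimension $\geq 2$) and the codimension-two removable-singularity theorem, and it isolates cleanly where the hypothesis is used, as your counterexample $P=z_1^2$, $f=z_1$ shows. One small simplification: at a smooth point $a$ with $\nabla P_1(a)\neq 0$ you may take $w_1=P_1$ itself as the first coordinate, which makes the factorization $P=w_1v$ with $v(a)\neq0$ immediate.
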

\begin{proof}
By the local analytic nullstellensatz, see \cite[Chapter 3]{GR}, for each $z\in\Omega$, there exists a neighborhood $U_{z}$ of $z$ and a function $g_{z}$ analytic in $U_{z}$ such that $f=g_{z}P$. By the identity principle applied in $\Omega$, we may glue together the functions $g_{z}$ to get a function $g$ analytic in $\Omega$ such that $f=gP$ there.
\end{proof}
Finally, for one of our applications, we will need some results about diffeomorphisms of the unit circle $\T=\R/2\pi\Z$.  The rotation number of an orientation-preserving homeomorphism $f:\T\to\T$ is defined as the  limit 
$$
\tau(f)=\lim_{n\to\infty}\frac{\tilde f^{n}(x)-x}{2\pi n},\qquad\text{mod }1,$$
independent of $x$, where $\tilde f^{n}$ denotes the $n$-th iterate of a lift $\tilde f$ to $\R$ of $f$,
satisfying $\Pi\circ\tilde  f=f\circ\Pi$, with $\Pi$ the projection $\Pi:\R\to\T$. Intuitively, the rotation number represents the
average portion of the circle that a point is moved by $f$.
\begin{lemma}[{\cite[Proposition 11.1.6]{KH}}]\label{cont-rot}
The map $f\mapsto \tau(f)$ is continuous in the uniform topology of $C(\T)$, the space of continuous maps from $\T$ to $\T$.
\end{lemma}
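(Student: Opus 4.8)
The plan is to reduce the statement to a quantitative estimate on lifts and then exploit uniform convergence. Throughout, one works with lifts to $\R$: an orientation-preserving homeomorphism $f$ of $\T=\R/2\pi\Z$ lifts to an increasing homeomorphism $\tilde f:\R\to\R$ with $\tilde f(x+2\pi)=\tilde f(x)+2\pi$, unique up to addition of an integer multiple of $2\pi$. Since $\tau(f)$ is taken modulo $1$, it suffices to prove continuity of the real-valued translation number
$$
\tilde\tau(\tilde f)=\lim_{n\to\infty}\frac{\tilde f^{n}(x)-x}{2\pi n}
$$
attached to a lift, and then to observe that near any fixed $f$ the lifts may be chosen to depend continuously on $f$ (pick the representative of $\tilde f_{k}$ nearest to $\tilde f$), so that $\tau=\tilde\tau \bmod 1$ inherits continuity. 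I restrict, as one must, to the set of orientation-preserving homeomorphisms inside $C(\T)$, which is where $\tau$ is defined.

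The key step is the uniform estimate
$$
\bigl|\tilde f^{n}(x)-x-2\pi n\,\tilde\tau(\tilde f)\bigr|<2\pi,\qquad x\in\R,\ n\geq1.\qquad(\ast)
$$
To obtain it, set $\phi_{n}(x)=\tilde f^{n}(x)-x$, a continuous $2\pi$-periodic function. From the cocycle identity $\phi_{m+n}(x)=\phi_{m}(\tilde f^{n}(x))+\phi_{n}(x)$ one reads off subadditivity of $n\mapsto\max_{x}\phi_{n}$ and superadditivity of $n\mapsto\min_{x}\phi_{n}$; moreover, monotonicity of $\tilde f^{n}$ together with the equivariance $\tilde f^{n}(x+2\pi)=\tilde f^{n}(x)+2\pi$ forces $\max_{x}\phi_{n}-\min_{x}\phi_{n}<2\pi$ for every $n$. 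Fekete's lemma then shows that $\max_{x}\phi_{n}/(2\pi n)$ and $\min_{x}\phi_{n}/(2\pi n)$ converge to a common limit, which is exactly $\tilde\tau(\tilde f)$; since both $2\pi n\,\tilde\tau(\tilde f)$ and every value $\phi_{n}(x)$ lie in the interval $[\min_{x}\phi_{n},\max_{x}\phi_{n}]$ of length $<2\pi$, the estimate $(\ast)$ follows.

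With $(\ast)$ in hand the continuity is a three-epsilon argument. Let $f_{k}\to f$ uniformly and choose lifts with $\tilde f_{k}\to\tilde f$ uniformly. For each fixed $n$, uniform continuity of the $2\pi$-periodic maps, propagated through composition, gives $\tilde f_{k}^{n}\to\tilde f^{n}$ uniformly, in particular $\tilde f_{k}^{n}(0)\to\tilde f^{n}(0)$. Applying $(\ast)$ to $f_{k}$ and to $f$ at $x=0$ and dividing by $2\pi n$ yields
$$
\bigl|\tilde\tau(\tilde f_{k})-\tilde\tau(\tilde f)\bigr|
\leq\frac{2}{n}+\frac{\bigl|\tilde f_{k}^{n}(0)-\tilde f^{n}(0)\bigr|}{2\pi n}.
$$
Given $\eps>0$, first fix $n$ with $2/n<\eps/2$, then let $k\to\infty$ to drive the second term below $\eps/2$; hence $\tilde\tau(\tilde f_{k})\to\tilde\tau(\tilde f)$, and therefore $\tau(f_{k})\to\tau(f)$.

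The main obstacle is the uniform-in-$n$ control of the oscillation $\max_{x}\phi_{n}-\min_{x}\phi_{n}<2\pi$, from which the $O(1/n)$ approximation $\tilde f^{n}(0)/(2\pi n)\approx\tilde\tau(\tilde f)$ flows; once this is secured the limiting argument is routine, the only remaining point of care being the coherent continuous choice of lifts that legitimizes passage modulo $1$.
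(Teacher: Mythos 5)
The paper does not actually prove this lemma: it is quoted verbatim from Katok--Hasselblatt \cite[Proposition 11.1.6]{KH} and used as a black box. Your argument is a correct, self-contained proof of the cited fact, and all the key points check out: the cocycle identity $\phi_{m+n}(x)=\phi_{m}(\tilde f^{n}(x))+\phi_{n}(x)$ does give sub/superadditivity of $\max\phi_n$ and $\min\phi_n$; monotonicity plus $\tilde f^{n}(x+2\pi)=\tilde f^{n}(x)+2\pi$ does force the oscillation of the periodic function $\phi_n$ to be $<2\pi$; Fekete then yields both the existence of $\tilde\tau$ and the uniform bound $(\ast)$ (since $2\pi n\tilde\tau$ is squeezed between $\min\phi_n$ and $\max\phi_n$, each being respectively a lower and an upper bound for the common limit by sub/superadditivity); and the three-epsilon estimate, together with the locally coherent choice of lifts, finishes the job. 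For comparison, the proof in Katok--Hasselblatt is in a slightly different style: it characterizes $\tau(f)<p/q$ (resp.\ $>p/q$) by the open condition $\tilde f^{q}(x)<x+2\pi p$ for all $x$ (resp.\ $>$), and sandwiches $\tau(f)$ between rationals; your quantitative $O(1/n)$ estimate is a bit more work up front but gives an explicit modulus of approximation of $\tau$ by the finite-time averages, which is arguably more informative. Two minor points worth making explicit if you were to write this up: the domain of $\tau$ is the set of orientation-preserving circle homeomorphisms (which you do note), and the induction showing $\tilde f_{k}^{n}\to\tilde f^{n}$ uses that $\tilde f$ is uniformly continuous on $\R$, which holds because $\tilde f-\mathrm{id}$ is continuous and $2\pi$-periodic.
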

A basic result concerning the diffeomorphisms of the circle is the theorem of Denjoy.
\begin{theorem}[Denjoy, {\cite[Theorem 12.1.1]{KH}}] \label{Denjoy}
A $C^{1}$ diffeomorphism $f:\T\to\T$ with irrational rotation number $\tau(f)$ and derivative of bounded variation (e.g.\ a $C^{2}$ diffeomorphism) is topologically conjugate to the rotation $R_{\tau(f)}$ of angle $2\pi\tau(f)$. Namely, there exists a homeomorphism $h$ of the circle such that
$$
f=h^{-1}\circ R_{\tau(f)}\circ h.
$$
\end{theorem}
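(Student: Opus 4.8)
The plan is to prove Denjoy's theorem by the classical two-step route: reduce the conjugacy claim to the absence of \emph{wandering intervals}, and then exclude such intervals using the bounded-variation hypothesis. Throughout, write $\rho=\tau(f)\notin\Q$ and let $R_{\rho}$ denote the rotation of angle $2\pi\rho$. First I would invoke the Poincaré semiconjugacy theorem (see \cite{KH}): for an orientation-preserving homeomorphism with irrational rotation number, the cyclic order on $\T$ of any orbit $\{\tilde f^{j}(x)\}_{j\in\Z}$ coincides with that of the rotation orbit $\{x+2\pi j\rho\}_{j\in\Z}$, a consequence of the definition of $\tau(f)$ and the monotonicity of $f$. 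Extending the order-preserving correspondence $f^{j}(x)\mapsto x+2\pi j\rho$ by continuity produces a monotone degree-one continuous surjection $h:\T\to\T$ with $h\circ f=R_{\rho}\circ h$. This $h$ is a genuine homeomorphism, hence the desired conjugacy, exactly when it collapses no nondegenerate arc, i.e.\ exactly when $f$ has no wandering interval: an interval $J$ with nonempty interior whose iterates $f^{n}(J)$, $n\in\Z$, are pairwise disjoint. The whole problem thus reduces to showing that a $C^{1}$ diffeomorphism with $f'$ of bounded variation has no wandering interval.

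The analytic core is a distortion estimate. Since $f$ is a $C^{1}$ diffeomorphism of the compact circle, $f'$ is continuous and strictly positive, so $\log f'$ has finite total variation $V=\mathrm{Var}_{\T}(\log f')$. If $J$ is an arc whose first $q$ forward iterates $J,f(J),\dots,f^{q-1}(J)$ are pairwise disjoint, then for $x,y\in J$,
$$
\Bigl|\log (f^{q})'(x)-\log(f^{q})'(y)\Bigr|\le\sum_{j=0}^{q-1}\mathrm{Var}_{f^{j}(J)}(\log f')\le\mathrm{Var}_{\T}(\log f')=V,
$$
because variations over disjoint arcs add up to at most the total variation. Hence $(f^{q})'$ varies by a factor at most $e^{V}$ on such a $J$, and the same bound holds for $f^{-q}$ on arcs with disjoint backward iterates.

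Next I would feed in the continued-fraction combinatorics of $\rho$. Let $(q_{n})$ be the denominators of the convergents of $\rho$. The closest-return structure of an orbit with rotation number $\rho$ guarantees that, for the endpoint $a$ of a putative wandering interval $J$, the iterates $f^{q_{n}}(a)$ and $f^{-q_{n}}(a)$ approach $a$ from opposite sides, and that the $q_{n}$ forward (resp.\ backward) iterates of both $J$ and of the short return arc $[f^{-q_{n}}(a),a]$ are pairwise disjoint. The disjointness makes the distortion estimate applicable with $q=q_{n}$, giving $|f^{\pm q_{n}}(J)|\ge e^{-V}(f^{\pm q_{n}})'(a)\,|J|$. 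Using the chain-rule identity $(f^{q_{n}})'(f^{-q_{n}}(a))\,(f^{-q_{n}})'(a)=1$ together with the bounded distortion of $f^{q_{n}}$ over $[f^{-q_{n}}(a),a]$, the product $(f^{q_{n}})'(a)\,(f^{-q_{n}})'(a)$ stays in $[e^{-V},e^{V}]$; the arithmetic–geometric mean inequality then bounds the sum of the two derivatives below, whence
$$
|f^{q_{n}}(J)|+|f^{-q_{n}}(J)|\ge c\,|J|,\qquad c=2e^{-3V/2}>0,
$$
uniformly in $n$. Since $J$ is wandering, the arcs $\{f^{\pm q_{n}}(J)\}_{n}$ are pairwise disjoint subarcs of $\T$, so $\sum_{n}\bigl(|f^{q_{n}}(J)|+|f^{-q_{n}}(J)|\bigr)\le 2\pi$; combined with the lower bound this forces a contradiction. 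Therefore no wandering interval exists, $h$ is a homeomorphism, and $f=h^{-1}\circ R_{\rho}\circ h$ as claimed.

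The main obstacle, and the genuinely delicate part, is the combinatorial bookkeeping of the third step: one must simultaneously use the continued-fraction approximation of $\rho$ to guarantee the \emph{disjointness} of the first $q_{n}$ iterates (so the distortion bound applies) and the closest-return inequalities to extract a \emph{uniform} lower bound on $\bigl(|f^{q_{n}}(J)|+|f^{-q_{n}}(J)|\bigr)/|J|$. Getting these two facts to cooperate independently of $n$ is precisely where the bounded-variation hypothesis is indispensable, and where weaker regularity (mere $C^{1}$) fails, as Denjoy's own counterexamples show.
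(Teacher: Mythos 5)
The paper does not prove this statement: it is quoted verbatim as a classical result with a citation to Katok--Hasselblatt, so there is no in-paper proof to compare against. Your sketch is a correct outline of the standard Denjoy argument (essentially the one in the cited reference): Poincar\'e semiconjugacy reducing the claim to the nonexistence of wandering intervals, the bounded-variation distortion estimate $\bigl|\log(f^{q})'(x)-\log(f^{q})'(y)\bigr|\le V$ over an arc with disjoint first $q$ iterates, and the closest-return combinatorics at the denominators $q_{n}$ yielding the uniform lower bound $|f^{q_{n}}(J)|+|f^{-q_{n}}(J)|\ge 2e^{-3V/2}|J|$, which contradicts the summability of the lengths of the pairwise disjoint iterates of a wandering interval.
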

\section{Proof of Theorem \ref{main}}\label{Proof}
\begin{proof}[Proof of Theorem \ref{main}]
Let $g\in\GG(\Gamma)$, the function space introduced in assumption (iv), such that its Fourier-Stieltjes transform $\hat g$ satisfies
$$\hat g(\lambda)=\int_{\Gamma}e^{-i\lambda\cdot x}g(x)d\sigma(x)=0,\quad\lambda\in Z_{\R}(P),$$
with $d\sigma$ the surface measure on $\Gamma$. 
Taking the Fourier transform $\FF$ in $\R^{n}$ on both sides of (\ref{sol-fund}), we get
$P(\lambda)\FF(E)=1$.
Moreover, The function $\hat g$ is an entire function in $\C^{n}$ which, by assumption, vanishes on $Z_{\R}(P)$. Hence, by Proposition \ref{Sottile} and its Corollary \ref{cor-sot}, it vanishes on $Z(P)$,
which implies, together with Lemma \ref{lem-div}, that the quotient $\hat g/P$ is also 
an entire function
(recall assumption (i) that $P$ is a square-free polynomial in $\C[X]$).
Hence, for the Fourier transform of $\Phi g$, we have
\begin{equation}\label{Four-trans}
\FF(\Phi g)=\FF(E* gd\sigma)=\FF(E)\FF(gd\sigma)=\hat g\FF(E)=(\hat g/P)P\FF(E)=\hat g/P.
\end{equation}
Next, we use the elementary fact that, if $h(z)$ is an analytic function in a neighborhood of the closed unit disk $\bar\D\subset\C$ and $p(z)$ is a polynomial with leading coefficient $\alpha$, one has
$$
|\alpha h(0)|\leq\sup_{|z|=1}|h(z)p(z)|.
$$
Thus, applying this inequality with the variable $z_{1}$, $z=(z_{1},0,\ldots,0)$, $p(z_{1})=P(z+\lambda)$, 
$h(z_{1})=\FF(\Phi g)(z+\lambda)$, and recalling the second part of assumption (i), we get
\begin{align*}
|\FF(\Phi g)(\lambda)| & \leq a_{1}^{-1}\sup_{|z_{1}|=1}|\FF(\Phi g)(z+\lambda)P(z+\lambda)|
\leq a_{1}^{-1}\sup_{|z_{1}|=1}\left|\int_{\Gamma}e^{-i(z+\lambda)\cdot y}g(y)d\sigma(y)\right|
\\[10pt]
& \leq C\sup_{y\in\Gamma}e^{\Im(\lambda)\cdot y}=Ce^{H_{\bar\Omega}(\Im(\lambda))},
\end{align*}
where $C$ is some constant independent of $\lambda$, and $H_{\bar\Omega}$ is the supporting function of $\bar\Omega$, the closure of $\Omega$. Now, applying
the Paley-Wiener theorem, see Theorem \ref{PW}, one derives that $\Phi g$ vanishes outside $\bar\Omega$. On the other hand, from the continuity of $\Phi g$ in $\R^{n}$, recall assumption (iv), $\Phi g$ also vanishes on $\Gamma$.
From assumption (iii), the homogeneous Dirichlet problem for the operator $P(-iD)$ has the unique zero solution in $\DD_{P}(\bar\Omega)$.  Hence $\Phi g$, which lies in $\DD_{P}(\bar\Omega)$ by assumption (iv), and satisfies, see (\ref{P-Phi=0}),
$$P(-iD)(\Phi g)(x)=0,\quad x\in\R^{n}\setminus\Gamma,$$ 
must vanish inside $\Omega$ and thus everywhere in $\R^{n}$. Consequently, 
$\FF(\Phi g)=0$ i.e.\ $\hat g=\FF(gd\sigma)=0$. The measure $gd\sigma$ is a distribution with compact support, hence a tempered distribution. From the fact that the Fourier transform is an isomorphism from the space of tempered distributions to itself, we thus obtain that $g=0$.
We have thus proved that $(\Gamma,Z_{\R}(P))$ is a Heisenberg pair for the space $\GG(\Gamma)$.
\end{proof}
\section{Applications}\label{Appli}
We shall restrict ourselves to quadratic polynomial 
$P(X_{1},\ldots,X_{n})\in\R[X]$ of the form
\begin{equation}\label{def-P}
P(X_{1},\ldots,X_{n})=-\sum_{i,j=1}^{n} a_{ij}X_{i}X_{j}+\sum_{i=1}^{n}b_{i}X_{i}+c,
\qquad a_{i,j}, b_{i}, c\in\R,
\end{equation}
with $(a_{ij})_{i,j}$, a symmetric matrix.
\subsection{Elliptic equations}
In this section, we consider elliptic operators, that is, we assume that the matrix $(a_{ij})_{i,j}$ in (\ref{def-P}) is positive.
By a linear change of variables with real coefficients (so that real points of $Z(P)$ correspond by the change of variables), we may restrict ourselves to operators of the form
$$
P(-iD)=\Delta-i\sum_{i=1}^{n}b_{i}\p_{x_{i}}+c\quad\text{where}\quad P(X)= -\sum_{i=1}^{n} X_{i}^{2}+\sum_{i=1}^{n}b_{i}X_{i}+c,\quad b_{i},c\in\R.
$$
Assume that the $b_{i}$'s are zero. Then, $Z(P)$ is defined by the equation
\begin{equation}\label{def-Z-ell}
\sum_{i=1}^{n} X_{i}^{2}-c=0.
\end{equation}
-- If $c<0$, $Z_{\R}(P)$ is the empty set so that Theorem \ref{main} does not apply.\\
-- If $c=0$ then $Z_{\R}(P)$ reduces to the singleton $0$, which is a singular point of $Z(P)$. 
\\
-- If $c=c_{1}^{2}>0$, with $c_{1}>0$, then $Z_{\R}(P)$ is the sphere $S_{n-1}(c_{1})$ of dimension $n-1$ and radius $c_{1}$.

Before we elaborate on the third case, and prove the first item in Theorem \ref{HUP1}, let us mention a result about continuity of
potentials of a single layer of the form
\begin{equation}\label{def-single}
(\Phi g)(x)=\int_{\Gamma}g(y)\vphi(x-y)d\sigma(y),\qquad x\in\R^{n}.
\end{equation}
\begin{theorem}\label{single-layer-cont}
Assume the following holds :\\
1) the hypersurface $\Gamma$ is of class $C^{1}$, \\
2) The function $\vphi:\R^{n}\setminus\{0\}\to\R$ is continuous,\\
3) The function $\vphi$ is weakly singular at 0, namely there exists constants $0<\nu<n-1$ and $C>0$ such that
\begin{equation*}
|\vphi(x)|\leq\frac{C}{|x|^{\nu}}\quad\text{as }x\to0.
\end{equation*}
4) $g\in L^{p}_{d\sigma}(\Gamma)$ with $p>1+\nu/(n-1-\nu)$.
\\
Then the potential $\Phi g$ in (\ref{def-single}) is continuous on $\R^{n}$. In particular, its limits on $\Gamma$ from the inside and the outside coincide,
$$
(\Phi g)_{+}(x)=(\Phi g)_{-}(x),\qquad x\in\Gamma.
$$
\end{theorem}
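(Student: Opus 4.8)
The plan is to prove continuity everywhere on $\R^n$ at once, by exhibiting $\Phi g$ as a uniform limit of continuous functions, the singularity of $\vphi$ at the origin being removed by a continuous cutoff. Concretely, I would fix a continuous function $\chi_\epsilon:\R^n\to[0,1]$ with $\chi_\epsilon(z)=0$ for $|z|\le\epsilon/2$ and $\chi_\epsilon(z)=1$ for $|z|\ge\epsilon$, and set
$$
\Phi_\epsilon g(x)=\int_\Gamma g(y)\,\vphi(x-y)\,\chi_\epsilon(x-y)\,d\sigma(y).
$$
Since the kernel $(x,y)\mapsto\vphi(x-y)\chi_\epsilon(x-y)$ is continuous and bounded on $\R^n\times\Gamma$ (the singular region $|x-y|<\epsilon/2$ having been cut out), and $g\in L^1(\Gamma)$ because $\Gamma$ has finite measure and $p>1$, dominated convergence shows each $\Phi_\epsilon g$ is continuous on $\R^n$. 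It then suffices to prove $\Phi_\epsilon g\to\Phi g$ uniformly on $\R^n$ as $\epsilon\to0$, since a uniform limit of continuous functions is continuous.

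For the uniform convergence I would estimate the remainder by H\"older's inequality. Writing $p'$ for the conjugate exponent of $p$, hypothesis 4) gives
$$
|\Phi g(x)-\Phi_\epsilon g(x)|\le\int_{\Gamma\cap B(x,\epsilon)}|g(y)|\,|\vphi(x-y)|\,d\sigma(y)
\le C\,\|g\|_{L^p(\Gamma)}\left(\int_{\Gamma\cap B(x,\epsilon)}\frac{d\sigma(y)}{|x-y|^{\nu p'}}\right)^{1/p'}.
$$
The role of the exponent condition now becomes apparent: an elementary computation shows that $p>1+\nu/(n-1-\nu)$ is equivalent to $\nu p'<n-1$, which is exactly what is needed for the surface integral of $|x-y|^{-\nu p'}$ over a small cap to converge. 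Running the same H\"older estimate over all of $\Gamma$ shows in passing that $\Phi g$ is well-defined and bounded.

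The heart of the proof, and the step I expect to be the main obstacle, is the \emph{uniform} cap estimate
$$
\sup_{x\in\R^n}\int_{\Gamma\cap B(x,\epsilon)}\frac{d\sigma(y)}{|x-y|^{\nu p'}}\le C'\,\epsilon^{\,n-1-\nu p'},
$$
whose right-hand side tends to $0$ as $\epsilon\to0$. For $x$ at positive distance from $\Gamma$ the cap is empty for small $\epsilon$, so the difficulty is $x$ near $\Gamma$. There I would use that $\Gamma$ is of class $C^1$: in a local chart $\Gamma$ is the graph of a $C^1$ function over its tangent plane, the surface measure $d\sigma$ is comparable to $(n-1)$-dimensional Lebesgue measure on that plane, and $|x-y|$ is controlled below by the in-plane distance. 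Comparison with the planar integral $\int_{|u|<\epsilon}|u|^{-\nu p'}\,du$ over $\R^{n-1}$, which equals a constant times $\epsilon^{\,n-1-\nu p'}$ precisely because $\nu p'<n-1$, then yields the bound. The delicate point is making the constant $C'$ independent of $x$; I would obtain this from the compactness of $\Gamma$ (it bounds a bounded domain in all cases of interest), which furnishes uniform $C^1$ control on the local graphs and on the Jacobian relating $d\sigma$ to planar Lebesgue measure.

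Combining the three steps, $\Phi_\epsilon g\to\Phi g$ uniformly on $\R^n$, whence $\Phi g$ is continuous on all of $\R^n$. In particular, since $\Phi g$ is a single continuous function, its one-sided limits at any point of $\Gamma$, from inside and from outside, both equal its value there, so that $(\Phi g)_+(x)=(\Phi g)_-(x)$ for $x\in\Gamma$.
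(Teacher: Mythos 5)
Your proposal is correct, and the analytic heart of it is the same as the paper's: H\"older's inequality with conjugate exponent $p'$, the observation that $p>1+\nu/(n-1-\nu)$ is equivalent to $\nu p'<n-1$, and the comparison of the surface integral over a small cap with the planar integral $\int_{|\tau|<\epsilon}|\tau|^{-\nu p'}d\tau$ via a local $C^1$ graph parametrization of $\Gamma$ (using $d\sigma\leq C\,d\tau$ and $|x-y|\geq|\tau_x-\tau_y|$). Where you differ is in the organization. The paper argues pointwise at each $x_0$: it splits $\Phi g(x)-\Phi g(x_0)$ into two integrals over a small cap $\Gamma(x_0,\delta)$ and one over the complement, kills the cap terms by the H\"older estimate (uniformly in $x$ near $x_0$), and treats the far part by dominated convergence. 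You instead truncate the kernel with a cutoff $\chi_\epsilon$, prove each $\Phi_\epsilon g$ is continuous, and show $\Phi_\epsilon g\to\Phi g$ uniformly. Your route is cleaner and yields a little more (a quantitative modulus $O(\epsilon^{(n-1-\nu p')/p'})$ for the truncation error, hence uniform continuity), but it requires the cap estimate to hold uniformly over \emph{all} $x$, which forces the compactness/finite-atlas argument you only sketch; the paper's localization at a fixed $x_0$ sidesteps that global uniformity. Two minor points: compactness of $\Gamma$ is indeed implicitly assumed (it is the boundary of a bounded domain throughout the paper, and is already needed for $L^p(\Gamma)\subset L^1(\Gamma)$), and your claim that the truncated kernel is bounded on all of $\R^n\times\Gamma$ should be weakened to local boundedness, since nothing is assumed about $\vphi$ at infinity; this suffices for continuity, which is a local property.
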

Since we were unable to find a convenient reference for this result, we provide a proof of Theorem \ref{single-layer-cont} in the Appendix.
\begin{proof}[Proof of i) in Theorem \ref{HUP1}]
We assume $c=c_{1}^{2}>0$.
When $n\geq2$ and $c\neq0$, the polynomial in (\ref{def-Z-ell}) is irreducible in $\C[X]$, and all points of $Z(P)$ are non singular. Moreover, the homogeneous Dirichlet problem for the Helmholtz operator $\Delta+c_{1}^{2}$ on $\Omega$ has non trivial solutions in $C^{0}(\bar\Omega)\cap C^{2}(\Omega)$ only when $c_{1}^{2}$ is an
an eigenvalue 
of the positive operator $-\Delta$ on $\Omega$. Finally, a fundamental solution of the Helmholtz operator is given (see e.g.\ \cite[p.40]{OW}), up to a multiplicative constant, by
$$
E_{n}(x) 
=|x|^{1-n/2}Y_{n/2-1}(c_{1}|x|),
$$
for \(x \neq 0,\) where $Y_{\alpha}$ is the Bessel function of the second kind of order $\alpha$.
Near 0, the following estimates hold,
$$
Y_{\alpha}(z)\sim
\begin{cases}
(2/\pi)\log(z),\quad& \alpha=0,
\\[5pt]
-(\Gamma(\alpha)/\pi)(2/z)^{\alpha},\quad & \alpha>0.
\end{cases}
$$
and thus
\begin{equation}\label{sol-fonda-Helm}
E_{n}(x)\sim
\begin{cases}
(2/\pi)\log(|x|),\quad& n=2,
\\[5pt]
-(\Gamma(n/2-1)/\pi)(2/c_{1})^{n/2-1}|x|^{2-n},\quad & n>2.
\end{cases}
\end{equation}
In view of (\ref{sol-fonda-Helm}), Theorem \ref{single-layer-cont} applies with $\vphi=E_{n}$ and $p>n-1$. Together with Theorem \ref{main}, we get that $(\Gamma,S_{n-1}(c_{1}))$ is a HUP for $L^{p}(\Gamma)$, $p>n-1$. 

Assume now that $c_{1}^{2}$ is an eigenvalue of $-\Delta$ on $\Omega$. Then, there exists some nonzero solution $u\in C^{2}(\Omega)\cap C(\bar\Omega)$ of the homogeneous Dirichlet problem in $\Omega$. Moreover, it is known, see e.g.\ Theorems 3.27 and 3.1 of \cite{CK} for the case $n=3$, that $u$ actually lies in $C^{1}(\bar\Omega)$, and that
$$
\int_{\Gamma}\frac{\partial u}{\partial \nu}(y) E_{n}(x- y)d \sigma(y)=
\begin{cases}
u(x), \quad  & x \in \Omega,
\\[5pt]
~0,  \quad & x \in \R^{n} \setminus \bar{\Omega},
\end{cases}
$$
where $\nu$ denotes the outward normal to the surface $\Gamma$. Hence, with $v=\p u/\p \nu\in C^{1}(\Gamma)$, one obtains $u=\Phi v$. First, notice that $v\neq0$ since $u\neq0$. Second, similarly to (\ref{Four-trans}), one has
$$
P(\lambda)\FF(u)(\lambda)=\hat v(\lambda),
$$
and $\FF(u)$ is an entire function since $u$ is continuous, with compact support, hence in $L^{1}(\R^{n})$. 
Hence, $\hat v$ vanishes on $Z_{\R}(P)$ and $(\Gamma,S_{n-1}(c_{1}))$ is not a HUP. This finishes the proof of
 item i) of Theorem \ref{HUP1}.
\end{proof}
\begin{remark}
When $\Omega$ is a ball, it is known that the eigenvalues of $\Delta$ are the zeros of Bessel functions of the first kind. 
\end{remark}
\begin{remark}
By the remark in (\ref{rem-T}) and the fact that convexity is preserved by linear transformation, one can make use of a linear diagonal transformation 
$$
T:~(x_{1},\ldots,x_{n})\mapsto (a_{1}x_{1},\ldots,a_{n}x_{n}),\quad a_{i}>0,\quad i=1,\ldots,n,$$
normalized by $\sum_{i=1}^{n}a_{i}^{2}=1$,
so that the statement i) in Theorem \ref{HUP1} becomes the following one. Let $\EE_{n-1}(c_{1})$ be the ellipsoid
$$
\sum_{i=1}^{n}\left(\frac{X_{i}}{a_{i}}\right)^{2}=c_{1}^{2}, 
$$
whose vector of semi-axes $c_{1}(a_{1},\ldots,a_{n})$ has norm $c_{1}$.
Let $\Gamma$ be the boundary of a bounded convex domain in $\R^{n}$.
Then, $(\Gamma,\EE_{n-1}(c_{1}))$ is a HUP for $L^{p}(\Gamma)$, $p>n-1$ if $c_{1}^{2}$ is different from an eigenvalue of the operator
$$
-\sum_{i=1}^{n}\frac{1}{a_{i}^{2}}\frac{\p^{2}}{\p x_{i}^{2}}\quad\text{on }\Omega.
$$
\end{remark}
\subsection{The Schr\"odinger equation}
In this section, we prove item ii) of Theorem \ref{HUP1} by considering the 1-dimensional Schr\"odinger equation, corresponding to the operator
$$
P(-iD)=i\p_{t}+\p_{x}^{2}\quad\text{where}\quad P(X)=-X_{1}-X_{2}^{2}.
$$
Then $Z(P)=\{(X_{1},X_{2})\in\C^{2},~X_{1}+X_{2}^{2}=0\}$ and $Z_{\R}(P)$ is a parabola in $\R^{2}$. 

In the literature, a fundamental solution of the Schr\"odinger equation is usually given by
$$
H(t)t^{-1/2}\exp(ix^{2}/(4t)),\qquad t\geq0,
$$
see e.g.\ \cite[Sect.\ 6.2]{T}, where $H(t)$ denotes the Heaviside function
$$
H(t)=0\text{ for }t\leq0\text{ and }H(t)=1\text{ for }t>0.
$$ 
Actually, one can also take as a fundamental solution,
$$
E_{1}(t,x)=t^{-1/2}\exp(ix^{2}/(4t)),\qquad t\neq0,
$$
without the factor $H(t)$, which allows one to reconstruct solutions to the Schr\"odinger equation for $t\in(-\infty,\infty)$ from an initial data at $t=0$, see \cite[Section 4.3.1.b, Example~3]{E}.

For some $T,L>0$, and $I=(0,L)$, we consider the rectangular domain 
\begin{equation}\label{rect}
\Omega:=(0,T)\times I\subset\R^{2},
\end{equation} 
with vertices 
$$V_{1}=(0,0),\quad V_{2}=(T,0), \quad V_{3}=(T,L), \quad V_{4}=(0,L).$$ 
Let 
\begin{equation}\label{Phi-Schr}
\Phi g(t,x)=\int_{\Gamma}g(u,y)\frac{e^{i(x-y)^{2}/(4(t-u))}}{\sqrt{t-u}}d\sigma(u,y),
\end{equation}
where $g$ is a function defined on the boundary $\Gamma$ of $\Omega$, and $d\sigma$ is the arc measure on $\Gamma$, which we assume to be positively oriented.
Also, we assume that the square root in (\ref{Phi-Schr}) satisfies
$$
\sqrt{t-u}=-i\sqrt{|t-u|}\quad\text{when }t<u.
$$
Our aim is to show the following result.
\begin{proposition}\label{cont-Schr} Assume the function $g$ to be of class $C^{2}$ on $\Gamma_{1}$ and $\Gamma_{3}$.
Then the function $\Phi g(t,x)$ is of class $C^{2}$ on $\R^{2}\setminus(\Delta_{0}\cup\Delta_{T})$, where $\Delta_{0}, \Delta_{T}$ are the vertical lines $t=0$ and $t=L$ respectively. Moreover, $\Phi g(t,x)$ admits left and right limits at each point of $\Delta_{0}$, resp.\ $\Delta_{L}$, and these limits coincide.
\end{proposition}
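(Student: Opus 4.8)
My plan is to split the potential into its four edge contributions and treat them separately. Write $\Phi g=\sum_{k=1}^{4}\Phi_{k}g$, where $\Phi_{k}g$ is the integral (\ref{Phi-Schr}) restricted to the edge $\Gamma_{k}$, with $\Gamma_{1}$ ($x=0$) and $\Gamma_{3}$ ($x=L$) the horizontal edges on which $g\in C^{2}$, and $\Gamma_{2}$ ($t=T$), $\Gamma_{4}$ ($t=0$) the vertical edges. On $\Gamma_{2}$ and $\Gamma_{4}$ the variable $u$ is constant ($u=T$, resp.\ $u=0$), so the kernel $E_{1}(t-u,x-y)=(t-u)^{-1/2}e^{i(x-y)^{2}/(4(t-u))}$ is, for $t\neq T$ (resp.\ $t\neq 0$), a smooth bounded function of $(t,x)$ uniformly in $y$; differentiation under the integral sign is then legitimate to all orders, and $\Phi_{2}g$, $\Phi_{4}g$ are $C^{\infty}$ on $\R^{2}\setminus\Delta_{T}$, resp.\ $\R^{2}\setminus\Delta_{0}$. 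This already isolates the two lines to be removed and reduces the problem to $\Gamma_{1}$ and $\Gamma_{3}$.

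On $\Gamma_{1}$, after the change of variable $s=t-u$ we obtain the model integral $\Phi_{1}g(t,x)=\int_{t-T}^{t}g(t-s,0)E_{1}(s,x)\,ds$, whose only singularity sits at the \emph{interior} point $s=0$, with $|E_{1}(s,x)|=|s|^{-1/2}$ independent of $x$. Continuity on $\{0<t<T\}$ then follows from dominated convergence, the dominating function $\sup|g|\,|s|^{-1/2}$ being uniformly integrable for $t$ in compact subintervals. The $t$-derivatives I would handle by integration by parts in $u$: since $\p_{t}E_{1}(t-u,\cdot)=-\p_{u}E_{1}(t-u,\cdot)$, each $\p_{t}$ is transferred onto $g(\cdot,0)$ (allowed because $g\in C^{2}$), at the cost of boundary terms $g(\cdot,0)E_{1}(t-\cdot,x)$ at $u=0,T$, i.e.\ at $s=t$ and $s=t-T$, both nonzero for $t\in(0,T)$ and hence smooth. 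Iterating twice yields continuity of $\p_{t}\Phi_{1}g$ and $\p_{t}^{2}\Phi_{1}g$.

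The delicate point is differentiation in $x$, since $\p_{x}E_{1}=\tfrac{ix}{2}s^{-3/2}e^{ix^{2}/(4s)}$ and $\p_{x}^{2}E_{1}$ carry the non-integrable weights $|s|^{-3/2}$ and $|s|^{-5/2}$. My plan is to Taylor-expand the amplitude about the singularity, writing $g(u,0)$ as its second-order Taylor polynomial at $u=t$ plus a remainder $\rho(t,u)=O(|u-t|^{2})$; the remainder integral $\int\rho(t,u)E_{1}(t-u,x)\,du$ then has an amplitude vanishing to second order at $s=0$, which exactly compensates the $|s|^{-5/2}$ from $\p_{x}^{2}$ and leaves an integrable $|s|^{-1/2}$, so that $\p_{x}$, $\p_{x}^{2}$ and the mixed $\p_{t}\p_{x}$ (combined with the integration by parts above) are obtained by differentiating under the integral. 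There remain the two base integrals $\int_{t-T}^{t}s^{j-1/2}e^{ix^{2}/(4s)}\,ds$, $j=0,1$, with constant and linear amplitudes; these I would evaluate through Fresnel/error functions, entire in $x/\sqrt{s}$ and hence smooth in $(t,x)$ on $\{0<t<T\}$, or, equivalently, exploit that $(i\p_{t}+\p_{x}^{2})E_{1}=0$ to express their $\p_{x}^{2}$ as boundary contributions. This last step is where the real obstacle lies: because the singularity is interior, the fundamental theorem of calculus cannot be applied across $s=0$ for the non-integrable $\p_{s}E_{1}$, and the would-be $|s|^{-1/2}$ endpoint contributions (and, at $x=0$, the apparent $1/x$ blow-up of $\p_{x}\Phi_{1}g$) cancel only because of the oscillation of $e^{ix^{2}/(4s)}$. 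I would make this rigorous by regularising, splitting $\int_{t-T}^{t}=\int_{t-T}^{-\eps}+\int_{\eps}^{t}$, integrating by parts on each piece, and checking that the boundary terms at $\pm\eps$ combine to $o(1)$ as $\eps\to 0$; the evenness of $\Phi_{1}g$ in $x$ provides a useful consistency check. The edge $\Gamma_{3}$ is identical upon replacing $x$ by $x-L$.

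Finally, for the one-sided limits on $\Delta_{0}$ and $\Delta_{T}$: near $t=0$ only $\Phi_{4}g$ is singular, and $t^{-1/2}e^{i(x-y)^{2}/(4t)}$ is, up to the constant $\sqrt{4\pi i}$, an approximate identity as $t\to 0$; with the prescribed branch $\sqrt{t}=-i\sqrt{|t|}$ for $t<0$ the kernel tends to the \emph{same} Dirac mass as $t\to 0^{+}$ and $t\to 0^{-}$, so $\Phi_{4}g(t,x)\to c\,g(0,x)$ with the same constant from both sides, while $\Phi_{1}g,\Phi_{3}g$ are continuous across $\Delta_{0}$ and $\Phi_{2}g$ is smooth there; hence the left and right limits of $\Phi g$ coincide, and the argument at $\Delta_{T}$ is symmetric. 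Throughout, the main difficulty is the $x$-differentiation across the interior singularity, where integrability is restored only by the oscillation and must be extracted by integration by parts or special-function identities rather than by absolute convergence.
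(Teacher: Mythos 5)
Your proposal follows essentially the same route as the paper: the same four-edge decomposition, smoothness of the vertical-edge contributions $\Phi_{2}g,\Phi_{4}g$ away from $\Delta_{0}\cup\Delta_{T}$, transfer of the $t$-derivatives of $\Phi_{1}g,\Phi_{3}g$ onto $g$ (the paper does this via the substitution $v=t-u$ and the Leibniz rule, which amounts to your integration by parts), and the stationary-phase computation showing that the branch $\sqrt{t-u}=-i\sqrt{|t-u|}$ forces the one-sided limits on the two vertical lines to agree. The one point where you go beyond the paper is the $x$-differentiation of $\Phi_{1}g,\Phi_{3}g$ across the interior singularity $u=t$: the paper simply asserts these integrals are $C^{\infty}$ in $x$, whereas you correctly observe that the formal $x$-derivatives are not absolutely convergent and that the would-be $|x|$-type singularity at $x=0$ (resp.\ $x=L$) cancels only between the $u<t$ and $u>t$ halves, precisely because of the oscillation and the chosen branch of the square root --- your Taylor-expansion-plus-Fresnel plan is a sound way to make that step rigorous.
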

\begin{proof}
We decompose the integral according to the four sides $\Gamma_{1}$, $\Gamma_{2}$, $\Gamma_{3}$, $\Gamma_{4}$, of the rectangle, starting with the horizontal side $\Gamma_{1}$ from $(0,0)$ to $(T,0)$, and get
$$
\Phi g(t,x)=\Phi_{1}g(t,x)+\Phi_{2}g(t,x)-\Phi_{3}g(t,x)-\Phi_{4}g(t,x),
$$
with
\begin{alignat*}{3}
\Phi_{1} g(t,x) & :=  \int_{u=0}^{T}g(u,0)\frac{e^{ix^{2}/(4(t-u))}}{\sqrt{t-u}}du,
\quad&& \Phi_{2} g(t,x) && :=  \int_{y=0}^{L}g(T,y)\frac{e^{i(x-y)^{2}/(4(t-T))}}{\sqrt{t-T}}dy,
\\[10pt]
\Phi_{3} g(t,x) & :=  \int_{u=0}^{T}g(u,L)\frac{e^{i(x-L)^{2}/(4(t-u))}}{\sqrt{t-u}}du,
\quad&& \Phi_{4} g(t,x) && :=  \int_{y=0}^{L}g(0,y)\frac{e^{i(x-y)^{2}/(4t)}}{\sqrt{t}}dy.
\end{alignat*}
It is clear that the first and third integrals 
are well-defined for any $(t,x)\in\R^{2}$ since the integrands are absolutely integrable. The second one is well-defined when $t\neq T$, and the fourth one is well defined when $t\neq0$. Moreover, the four integrals define
$C^{\infty}$ functions with respect to the variable $x\in\R$. The second and fourth ones define $C^{\infty}$ functions with respect to the variable $t$ when $t\notin\{0,T\}$. The first and third ones are also $C^{\infty}$ functions of $t$ when $t\notin[0,T]$. When $0\leq t\leq T$, we may write, with a change of variable $v=t-u$,
$$
\Phi_{1} g(t,x)  =  \int_{v=t-T}^{t}g(t-v,0)\frac{e^{ix^{2}/(4v)}}{\sqrt{v}}dv,
\quad \Phi_{3} g(t,x)  =  \int_{v=t}^{t-T}g(t-v,L)\frac{e^{i(x-L)^{2}/(4v)}}{\sqrt{v}}dv,
$$
which shows, together with the Leibniz integral rule, that the two integrals are $C^{2}$ functions of $t$ (recall that we assumed $g(u,y)$ to be a $C^{2}$ function on $\Gamma_{1}$ and $\Gamma_{3}$). 

Finally, it remains to consider 
$\Phi_{2}g(t,x)$ and $\Phi_{4}g(t,x)$, respectively when $t=T$ and $t=0$. Obviously, the integral expressions for $\Phi_{2}g(T,x)$ and $\Phi_{4}g(0,x)$ are not defined, we thus compute the left and right limits as $t\to T_{\pm}$ or $t\to 0_{\pm}$. Recall from the method of stationary phase that, for $\alpha<0<\beta$,
$$
\frac{1}{\sqrt{\pi \epsilon}} \int_{\alpha}^{\beta} e^{iy^{2}/\eps} a(y) d y= e^{i {\pi}/{4}}a(0)+O(\epsilon) \quad \text { as } \epsilon \rightarrow 0_{+},
$$
where $a(y)$ is a $C^{1}$ function, see \cite[Theorem 13.1]{O} or \cite[Section 4.5.3]{E}.
Hence, for $0<x<L$,
$$
\lim_{t\to T_{+}}\Phi_{2}g(t,x)=\lim_{t\to T_{+}}\int_{y=0}^{L}g(T,y)
\frac{e^{i(x-y)^{2}/(4(t-T))}}{\sqrt{t-T}}dy=\frac{\sqrt{\pi}}{2}e^{i\pi/4}g(T,x),
$$
and
\begin{align*}
\lim_{t\to T_{-}} \Phi_{2}g(t,x) 
& = \lim_{t\to T_{-}}\int_{y=0}^{L}g(T,y)
\frac{e^{-i(x-y)^{2}/(4(T-t))}}{-i\sqrt{T-t}}dy
\\[10pt]
& = i\lim_{t\to T_{-}}\int_{y=0}^{L}g(T,y)
\frac{\bar{e^{i(x-y)^{2}/(4(T-t))}}}{\sqrt{T-t}}dy
=i\frac{\sqrt{\pi}}{2}e^{-i\pi/4}g(T,x)=\lim_{t\to T_{+}}\Phi_{2}g(t,x),
\end{align*}
which shows, together with the assumption that $x\in[0,L]\mapsto g(T,x)$ is continuous, that $\Phi_{2}g$ can be extended to the right vertical side $\Gamma_{2}$ as a continuous function. With help of \cite[Theorem 13.2]{O}, one could also check that, for $x\not\in(0,L)$, the left and right limits of $\Phi_{2}g(t,x)$ coincide at $T$, and are equal to 0 when $x\not\in[0,L]$. Though this will be not important for our analysis, we note that $\Phi_{2}g$ has a discontinuity at the right vertices $(T,0)$ and $(T,L)$ of the rectangle if the function $g(t,x)$ does not vanish at these points.

One may check that the function $\Phi_{4}g$ exhibits similar behavior on the line $t=0$, in particular, it can be continuously extended on the left vertical side $\Gamma_{4}$ of the rectangle.
\end{proof}
Next, in order to apply Theorem \ref{main}, we need an uniqueness result for the initial boundary value problem related to the Schr\"odinger equation, see \cite[Lemma 7.1.1, Theorem 7.2.1]{CH},
\begin{theorem}\label{IBVP-Schr}
The initial boundary value problem
$$
\begin{cases}
iu_{t}(t,x)+\Delta u(t,x)=0,\quad x\in I,~t>0,
\\[5pt]
u(0,x)=\vphi(x)\in H_{0}^{1}(I),\quad x\in \bar I=[0,L],
\\[5pt]
u(t,0)=0,~u(t,L)=0,\quad t\geq0,
\end{cases}
$$
has a unique solution $u$ in the space $C([0,\infty),L^{2}(I))$
of continuous functions from $[0,\infty)$ to $L^{2}(I)$ 
(actually, the solution is in 
$$C([0,\infty),H_{0}^{1}(I))\cap C^{1}([0,\infty),H^{-1}(I)),$$
but this result will not be needed to us). Here, $H_{0}^{1}(I)$ and $H^{-1}(I)$
 denote the usual Sobolev spaces of the segment $I$.
 \end{theorem}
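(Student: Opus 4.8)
The plan is to combine a spectral construction for existence with an energy identity for uniqueness. First I would recall that the operator $-\Delta=-\p_{x}^{2}$ on $I=(0,L)$, with domain $H^{2}(I)\cap H_{0}^{1}(I)$ encoding the Dirichlet conditions $u(t,0)=u(t,L)=0$, is self-adjoint, non-negative, and has compact resolvent. Its spectrum is therefore discrete, consisting of the eigenvalues $\lambda_{k}=(k\pi/L)^{2}$, $k\in\N$, with associated orthonormal basis $e_{k}(x)=\sqrt{2/L}\,\sin(k\pi x/L)$ of $L^{2}(I)$. Writing the initial datum as $\vphi=\sum_{k}c_{k}e_{k}$, with $c_{k}=\langle\vphi,e_{k}\rangle$, and noting that the equation $iu_{t}+\Delta u=0$ reads $u_{t}=i\Delta u$, the candidate solution is
$$
u(t,x)=\sum_{k\geq1}c_{k}\,e^{-i\lambda_{k}t}\,e_{k}(x).
$$

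For existence, I would verify that this series defines an element of $C([0,\infty),L^{2}(I))$. Since $|e^{-i\lambda_{k}t}|=1$, Parseval's identity gives $\|u(t,\cdot)\|_{L^{2}}^{2}=\sum_{k}|c_{k}|^{2}=\|\vphi\|_{L^{2}}^{2}$ for every $t$, so the partial sums converge in $L^{2}$ uniformly in $t$, whence $t\mapsto u(t,\cdot)$ is continuous. The equation then holds termwise, hence in the distributional sense after summation, while the initial condition $u(0,\cdot)=\vphi$ and the Dirichlet conditions are built into the construction. The sharper regularity $u\in C([0,\infty),H_{0}^{1}(I))$ announced in the statement would follow from the characterization $\vphi\in H_{0}^{1}(I)\iff\sum_{k}\lambda_{k}|c_{k}|^{2}<\infty$, which is preserved in $t$; but, as noted, this refinement is not needed here.

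For uniqueness it suffices to show that the homogeneous problem ($\vphi=0$) admits only the zero solution. Given a solution $u$, I would differentiate the squared norm:
$$
\frac{d}{dt}\int_{I}|u(t,x)|^{2}\,dx=2\,\Re\int_{I}\bar u\,u_{t}\,dx=2\,\Re\Big(i\int_{I}\bar u\,\p_{x}^{2}u\,dx\Big).
$$
Integrating by parts and using $u(t,0)=u(t,L)=0$ to annihilate the boundary terms, $\int_{I}\bar u\,\p_{x}^{2}u\,dx=-\int_{I}|\p_{x}u|^{2}\,dx$ is real, so the right-hand side equals $2\,\Re(-i\int_{I}|\p_{x}u|^{2}\,dx)=0$. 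Hence $\|u(t,\cdot)\|_{L^{2}}$ is constant, equal to $\|\vphi\|_{L^{2}}=0$, and $u\equiv0$. Equivalently, one may package existence and uniqueness together through Stone's theorem: since $-\Delta$ is self-adjoint and non-negative, $i\Delta$ is skew-adjoint on $L^{2}(I)$ and therefore generates a strongly continuous unitary group $U(t)=e^{it\Delta}$, and $u(t)=U(t)\vphi$ is the unique solution.

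The main obstacle is not the computation but the rigorous justification of the energy identity at the regularity level $C([0,\infty),L^{2}(I))$, where $u_{t}$ and $\p_{x}^{2}u$ need not exist classically and the integration by parts is only formal. I would resolve this either by first establishing the identity for the smooth solutions arising from data $\vphi$ in the dense subspace $H^{2}(I)\cap H_{0}^{1}(I)$, where every manipulation is legitimate, and then passing to the limit using density together with the isometry of $U(t)$ on $L^{2}(I)$; or by working directly with the unitary group, whose conservation law $\|U(t)\vphi\|_{L^{2}}=\|\vphi\|_{L^{2}}$ encodes the energy identity without any formal integration by parts.
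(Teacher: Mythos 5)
The paper offers no proof of this statement: it is imported verbatim from Cazenave--Haraux \cite{CH} (Lemma 7.1.1 and Theorem 7.2.1), whose argument is exactly the one you outline, namely that $i\Delta$ with domain $H^{2}(I)\cap H_{0}^{1}(I)$ is skew-adjoint and generates a unitary group by Stone's theorem, realized concretely by the sine series $u(t,x)=\sum_{k}c_{k}e^{-i\lambda_{k}t}e_{k}(x)$. So your proposal is correct and follows the same route as the cited source; the existence part and the conservation of the $L^{2}$ norm are fine as written. The one step that deserves to be tightened is uniqueness in the class $C([0,\infty),L^{2}(I))$: the energy identity cannot be applied directly to an \emph{arbitrary} solution of that regularity, and the density argument you propose only transfers the identity to the constructed solutions $U(t)\vphi$ and their limits, not to an a priori given weak solution with zero data. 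The clean repair, already implicit in your spectral setup, is to project an arbitrary solution onto the eigenfunctions: the coefficients $c_{k}(t)=\langle u(t,\cdot),e_{k}\rangle$ are continuous and satisfy $ic_{k}'(t)=\lambda_{k}c_{k}(t)$ in the distributional sense (one integrates by parts onto the smooth function $e_{k}$, which vanishes at the endpoints, so no regularity of $u$ beyond $L^{2}$ is needed), whence $c_{k}(t)=c_{k}(0)e^{-i\lambda_{k}t}$ and $u\equiv U(t)\varphi$. Alternatively, invoking the standard semigroup fact that weak $C([0,T],X)$-solutions of $u'=Au$ for a group generator $A$ coincide with $e^{tA}u(0)$ accomplishes the same thing; either way the gap is minor and entirely fixable.
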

Since $\Phi g(t,x)$ is a $C^{2}$ function when $t>0$, that is on the right of the vertical line $\Delta_{0}$, in order to prove that $t\mapsto\Phi g(t,x)$ is a continuous function from  $[0,T)$ to $L^{2}(I)$, it is sufficient to check that
$$
\|\Phi g(t,x)-\Phi g(0,x)\|_{L^{2}(I)}\to0\quad\text{as }t\to0_{+},
$$
and, more precisely, that
\begin{equation}\label{est-phi4}
\|\Phi_{4} g(t,x)-\Phi_{4} g(0,x)\|_{L^{2}(I)}\to0\quad\text{as }t\to0_{+},
\end{equation}
since $\Phi_{i}g$, $i=1,2,3$, are of class $C^{2}$ in a neighborhood of the segment $\{0\}\times I$. The fact that (\ref{est-phi4}) holds true can be derived from uniform error estimates for the stationary phase approximations, see \cite{O2}. We let the details to the reader.

Putting together Proposition \ref{cont-Schr} and Theorem \ref{IBVP-Schr} applied with the initial data $\vphi(x)=0$ on $\bar I=[0,L]$, we see that the method from Theorem \ref{main} applies, leading to item ii) of Theorem \ref{HUP1}.
\subsection{Hyperbolic equations}
Here we prove item iii) of Theorem \ref{HUP1} by considering the 1-dimensional wave equation, that is, the operator
$$
P(-iD)=\p_{t}^{2}-\p_{x}^{2}\quad\text{where}\quad P(X)=-X_{1}^{2}+X_{2}^{2}=(X_{2}-X_{1})(X_{2}+X_{1}).
$$
Then $Z(P)$ decomposes into the union of two irreducible varieties, namely the two lines $X_{1}=\pm X_{2}$ in $\C^{2}$, and 
$$Z_{\R}(P)=\Delta_{+}\cup\Delta_{-}\subset\R^{2},\qquad \Delta_{+}:X_{1}= X_{2},\quad\Delta_{-}: X_{1}=-X_{2}.$$
A fundamental (distributional) solution of the wave equation is known to be, up to a multiplicative constant,
$$
E_{1}(t,x)=H(t)H(t-x)H(t+x),
$$
see \cite[Sect.\ 7]{T}, where $H$ still denotes the Heaviside function.

As in the case of the Schr\"odinger equation, we consider the rectangular domain $\Omega\subset\R^{2}$ with vertices $(0,0)$, $(T,0)$, $(T,L)$, $(0,L)$, with $T,L>0$. Let $g$ be a $C^{1}$ function defined on the boundary $\Gamma$ of $\Omega$. Then,
$$
\Phi g(t,x)=\int_{\Gamma}g(u,y)H(t-u)H((t-u)-(x-y))H((t-u)+(x-y))d\sigma(u,y),
$$
where $d\sigma$ is the arc measure on $\Gamma$, which we assume to be positively oriented. In view of the definition of $H$, the function $\Phi g$ rewrites as
\begin{equation}\label{Phi-g-wave}
\Phi g(t,x)=\int_{\Gamma_{t,x}}g(u,y)d\sigma(u,y),
\end{equation}
where $\Gamma_{t,x}$ is the subarc (possibly empty) of $\Gamma$, which intersects the sector
$$
\{(u,y)\in\R^{2},~u< t,\quad u-y< t-x,\quad u+y<t+x\}.
$$
Depending on the location of the point $(t,x)\in\R^{2}$, it is readily checked that the endpoints of $\Gamma_{t,x}$ are two of the points
$$
(0,x-t),~(0,t+x),~(t-x,0),~(t+x,0),~(T,T-t+x),~(T,t+x-T),~(L+t-x,L),~(t+x-L,L).
$$
Because $g$ has been assumed to be a $C^{1}$ function, and all coordinates of the above points are just affine functions of $t$ and $x$, it follows from (\ref{Phi-g-wave}) that the function $\Phi g$ is of class $C^{2}$ in $\R^{2}$.

Assuming $\hat g=0$ on the two lines $X_{1}=\pm X_{2}$ in $\R^{2}$, we know from the proof of Theorem \ref{main} that the function $\Phi g$ vanishes outside of $\Omega$, and by continuity also on $\Gamma$. Hence, $\Phi g$, which belongs to $C^{2}(\Omega)\cap C^{1}(\bar\Omega)$ is a solution of the wave equation, and solves the homogeneous Dirichlet problem in $\bar\Omega$.
Let us now recall the following result, see \cite[Theorem 1]{DZ1},
\begin{theorem}
The homogeneous Dirichlet problem
$$(\p_{t}^{2}-\p_{x}^{2})f=0\text{ in }\Omega,\quad f=0\text{ on }\Gamma,$$
has the unique solution $f=0$ in $C^{2}(\Omega)\cap C^{1}(\bar\Omega)$ if and only if $T/L$ is an irrational number.
\end{theorem}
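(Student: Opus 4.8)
The plan is to reduce the problem to the one–variable structure of $C^{2}$ solutions of the wave equation and then to read the four Dirichlet conditions as reflection symmetries. First I would invoke d'Alembert's representation: since $f\in C^{2}(\Omega)$ solves $\p_{t}^{2}f-\p_{x}^{2}f=0$ on the rectangle $\Omega=(0,T)\times(0,L)$, there exist one–variable functions $F$ on $[0,T+L]$ and $G$ on $[-L,T]$, continuous up to the endpoints because $f\in C^{1}(\bar\Omega)$, such that $f(t,x)=F(t+x)+G(t-x)$. Writing $f=0$ on the four sides gives four functional identities among the values of $F$ and $G$; concretely, $f=0$ on $x=0$ and on $x=L$ express that $f$ is odd under reflection in each of these vertical lines, and $f=0$ on $t=0$ and on $t=T$ that $f$ is odd under reflection in the horizontal lines.

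The central step is to extend $f$ off $\bar\Omega$ by successive odd reflections in the four sides. A reflection of a d'Alembert solution is again one of the same form: reflection in $x=0$ sends $F(t+x)+G(t-x)$ to $-G(t+x)-F(t-x)$, and similarly for the other sides. The boundary conditions are exactly what makes the reflected pieces glue continuously — for instance continuity of the extended $F$ at the junction is the identity $F(0)+G(0)=0$ coming from the side $x=0$. Iterating these reflections covers $\R^{2}$ and produces a continuous function $\widehat f(t,x)=F(t+x)+G(t-x)$, with $F,G$ now continuous on all of $\R$, which is odd and $2L$–periodic in $x$ and odd and $2T$–periodic in $t$. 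I would deliberately carry out this extension at the level of the continuous one–variable functions $F,G$, so that only continuity — not a $C^{2}$ matching of second derivatives of $f$ across the reflection lines — is ever required; this is important since the hypothesis only gives $f\in C^{1}(\bar\Omega)$, and the second derivatives of $f$ need not extend to $\Gamma$.

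Now I would extract the periodicities of $F$ alone. Oddness in $x$, $\widehat f(t,-x)=-\widehat f(t,x)$, gives $(F+G)(t+x)+(F+G)(t-x)=0$ for all $t,x$, hence $F+G\equiv0$, so $G=-F$ and $\widehat f(t,x)=F(t+x)-F(t-x)$. Inserting this into the $2L$–periodicity in $x$ yields, after rearrangement, an identity whose two sides depend respectively on $t+x$ and on $t-x$ and are therefore equal to a common constant; since a shift of $x$ by $2L$ moves the two characteristic arguments $t\pm x$ in opposite directions, that constant must vanish, and $F$ is $2L$–periodic. The $2T$–periodicity in $t$ moves both arguments in the same direction and only gives $F(\,\cdot\,+2T)-F(\,\cdot\,)\equiv c$; but $F$, being $2L$–periodic, is bounded, so $c=0$ and $F$ is also $2T$–periodic. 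If $T/L\notin\Q$ the group $2L\Z+2T\Z$ is dense in $\R$, so the continuous periodic function $F$ is constant; then $\widehat f\equiv0$, and in particular $f\equiv0$ on $\Omega$. This establishes uniqueness when $T/L$ is irrational.

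For the converse I would exhibit a nonzero solution whenever $T/L=p/q$ with $p,q$ coprime positive integers: the standing wave $f(t,x)=\sin(p\pi t/T)\,\sin(q\pi x/L)$ is smooth, satisfies $\p_{t}^{2}f-\p_{x}^{2}f=0$ because $(p\pi/T)^{2}=(q\pi/L)^{2}$, and vanishes on all four sides since $\sin$ vanishes at the corresponding integer multiples of $\pi$; hence uniqueness fails. The argument is thus complete in both directions, and I expect the only real work — the main obstacle — to be the careful verification that the four reflections compose consistently, so that $\widehat f$ is genuinely single–valued and doubly periodic on $\R^{2}$; the regularity subtlety (only $C^{1}$ up to $\Gamma$) is then sidestepped precisely by phrasing everything through the continuous functions $F$ and $G$.
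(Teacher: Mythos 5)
Your argument is essentially correct, but note that the paper does not prove this statement at all: it is quoted verbatim from Dunninger and Zachmanoglou \cite{DZ1} and used as a black box. What you have written is therefore a self-contained reconstruction of the classical proof (going back to Bourgin--Duffin and reproduced in \cite{DZ1}): d'Alembert decomposition, reflection across the sides, and the observation that a continuous function with two incommensurable periods is constant. Your converse via the standing wave $\sin(p\pi t/T)\sin(q\pi x/L)$ is exactly right, and your extraction of the two periods of $F$ from the symmetries of $\widehat f$ is sound (the identity $F(s+2L)-F(s)=c=F(r-2L)-F(r)$ does force $c=-c=0$, and boundedness kills the constant in the $2T$ relation). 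Two small remarks on the step you flag as the main obstacle. First, continuity of $F$ and $G$ up to the endpoints of $[0,T+L]$ and $[-L,T]$ already follows from $f\in C(\bar\Omega)$ by approaching the four corners along characteristics, so the $C^{1}(\bar\Omega)$ hypothesis is not really used there. Second, the consistency of the iterated reflections is handled most cleanly not by reflecting $f$ but by doing exactly what you propose in passing: read the four boundary conditions as functional equations for $F,G$ alone, namely $G=-F$ on $[0,T]$ (side $x=0$), $F(\sigma)=F(\sigma-2L)$ for $\sigma\in[L,T+L]$ (side $x=L$), $F(x)=F(-x)$ for $x\in[0,L]$ (side $t=0$, after substituting $G=-F$), and $F(T+x)=F(T-x)$ for $x\in[0,L]$ (side $t=T$). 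The second relation lets you extend $F$ to a continuous $2L$-periodic function on $\R$ unambiguously (the interval $[-L,T+L]$ has length at least $2L$, and the relation guarantees the periodic extension agrees with $F$ on all of it); the two evenness relations then propagate by periodicity to all of $\R$, and composing the symmetries about $0$ and about $T$ gives the period $2T$. This avoids any discussion of whether the reflected pieces of $f$ glue to a distributional solution of the wave equation on $\R^{2}$, which is the one genuinely delicate point in the version phrased through $\widehat f$. With that reorganization your proof is complete, and it has the merit of making the paper's citation self-contained.
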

Actually, \cite{DZ1} considers the $(n+1)$-dimensional wave equation in a rectangular domain, and \cite{DZ2} considers more general hyperbolic equations in cylindrical domains.

From what precedes, we see that Theorem \ref{main} applies, leading to item iii) of Theorem \ref{HUP1}.
More generally, making use of (\ref{rem-T}), one gets that, for $c>0$, $(\Gamma, \Delta_{c,+}\cup\Delta_{c,-})$ is an Heisenberg uniqueness pair  for $C^{1}(\Gamma)$ if $T/(cL)\notin\Q$, where
$\Delta_{c,+}$ and $\Delta_{c,-}$ are the lines with slopes $c$ and $-c$.
\subsection{A simple first order equation} 
The goal of this section is to prove item iv) of Theorem \ref{HUP1} and Theorem \ref{HUP2}. 
We consider the simple differential equation of first order $u_{y}=0$ in $\R^{2}$, associated to the operator
$$
P(-iD)=i\p_{y}\quad\text{where}\quad P(X)=-X_{2},
$$
in a convex domain $\Sigma$ with boundary $\Gamma$. The set $\Lambda$ is the horizontal line $\Delta_{0}:=\{X_{2}=0\}$.

As is easily checked, a fundamental solution is given by the distribution
$$
E:\vphi\mapsto\int_{0}^{\infty}\vphi(0,s)ds.
$$
In view of (\ref{eq-Tg}), we have, for $g$ a function on $\Gamma$, integrable with respect to $d\sigma$, $\vphi$ a test function, and $x=(x_{1},x_{2})$,
$$
<\Phi g,\vphi>=\int_{\Gamma}<E(y),\vphi(x+y)>g(x)d\sigma(x)
=\int_{x\in\Gamma}\int_{y=0}^{\infty}\vphi(x_{1},x_{2}+y)g(x)dyd\sigma(x).
$$
We choose as a domain the unit disk $\Sigma=\D$ with boundary the unit circle $\Gamma=\T$. We set
$$
\T_{+}=\{(x_{1},x_{2})\in\T,~x_{2}\geq0\},\quad\T_{-}=\{(x_{1},x_{2})\in\T,~x_{2}<0\},
$$
and we parameterize these two arcs by the functions $s\in[-1,1]\to(s,\pm\sqrt{1-s^{2}})$, so that
$$
d\sigma(x)=\frac{|s|ds}{\sqrt{1-s^{2}}}.
$$
Setting $g(x)=g_{+}(s)$ for $x\in\T_{+}$ and $g(x)=g_{-}(s)$ for $x\in\T_{-}$, we get, 
\begin{align*}
<\Phi g,\vphi> & =\int_{s=-1}^{1}\int_{y=0}^{\infty}\vphi(s,\sqrt{1-s^{2}}+y)|s|g_{+}(s)\frac{dyds}{\sqrt{1-s^{2}}}
\\[5pt]
& + \int_{s=-1}^{1}\int_{y=0}^{\infty}\vphi(s,-\sqrt{1-s^{2}}+y)|s|g_{-}(s)\frac{dyds}{\sqrt{1-s^{2}}}.
\end{align*}
Letting $u=y+\sqrt{1-s^{2}}$ in the first double integral and $u=y-\sqrt{1-s^{2}}$ in the second one, the above expression becomes
\begin{align*}
<\Phi g,\vphi> =\int_{(s,u)\in D_{+}}\vphi(s,u)|s|g_{+}(s)\frac{d\lambda(s,u)}{\sqrt{1-s^{2}}}
+ \int_{(s,u)\in D_{-}}\vphi(s,u)|s|g_{-}(s)\frac{d\lambda(s,u)}{\sqrt{1-s^{2}}},
\end{align*}
where $D_{+}$ (resp.\ $D_{-}$) denotes the subset of $\R^{2}$ which lie above $\T_{+}$ (resp.\ $\T_{-}$), and $d\lambda$ denotes the planar Lebesgue measure. Thus, the distribution $\Phi g$ is actually a locally integrable function in $L^{1}_{loc}(\R^{2})$, namely
\begin{align*}
\Phi g(s,u) & =\chi_{D_{+}}(s,u)\frac{g_{+}(s)|s|}{\sqrt{1-s^{2}}}
+\chi_{D_{-}}(s,u)\frac{g_{-}(s)|s|}{\sqrt{1-s^{2}}}
\\[5pt]
& =\chi_{\D}(s,u)\frac{g_{-}(s)|s|}{\sqrt{1-s^{2}}}+
\chi_{D_{+}}(s,u)(g_{+}(s)+g_{-}(s))\frac{|s|}{\sqrt{1-s^{2}}},
\end{align*}
where $\chi_{A}$ denotes the characteristic function of a set $A\subset\R^{2}$.

Now, assume that $\hat g=0$ on $\Lambda$.
Here we can not apply directly Theorem \ref{main} since, a priori, $\Phi g$ is not a continuous function. Nevertheless, the first part of the proof of Theorem \ref{main} tells us that $\Phi g=0$ on $D_{+}$, that is
\begin{equation}\label{rel-g1}
g_{+}(s)=-g_{-}(s),\quad a.e.~s\in[-1,1]\quad\iff\quad
g(e^{i\theta})=-g(e^{-i\theta}),\quad a.e.~\theta\in[0,2\pi].
\end{equation}

Now, if we assume that, for some given angle $\rho\in(0,\pi)$, one has $\hat g=0$ on the line $\Delta_{\rho}:=\{X_{1}=(\cot\rho) X_{2}\}$, and consider, instead of $u_{y}=0$, the differential equation 
$$u_{x}-(\cot\rho) u_{y}=0,$$ 
we derive, in a completely similar fashion, that
\begin{equation}\label{rel-g2}
g(e^{i(\rho+\theta)})+g(e^{i(\rho-\theta)})=0,\quad a.e.~ \theta\in[0,2\pi].
\end{equation}
Combining (\ref{rel-g1}) and (\ref{rel-g2}), we obtain
$$
g(e^{i(\theta+\rho)})=g(e^{i(\theta-\rho)}),\quad a.e.~\theta\in[0,2\pi],
$$
that is, $2\rho$ is a period of the function $\theta\mapsto g(e^{i\theta})$. By uniqueness of the Fourier coefficients of a function in $L^{1}(\T)$, 
we get
$$
a_{n}=a_{n}e^{i2n\rho},\quad n\in\Z.
$$
Assume $a_{n}\neq0$ for some $n\neq0$. Then $2n\rho=2k\pi$ for some $k\in\Z$ which implies that $\rho/\pi$ is a rational number. Hence, if $\rho/\pi\not\in\Q$, $g$ is a constant function, and by (\ref{rel-g1}), it has to vanish almost everywhere. Finally, making use of the property (\ref{rem-T}) with $T$ a rotation, we obtain the HUP stated in item iv) of Theorem \ref{HUP1}.

Conversely, assume that $\rho/\pi=k/n\in\Q$. Then, the nonzero function $g(e^{i\theta})=e^{in\theta}-e^{-in\theta}$ satisfies (\ref{rel-g1}) and (\ref{rel-g2}). Hence, with $\Phi_{1}g$ and $\Phi_{2}g$ the potentials associated to the fundamental solutions of $\p_{y}$ and $\p_{x}-(\cot\rho)\p_{y}$, one gets that $\Phi_{1}g$ and $\Phi_{2}g$ have compact support since they vanish outside $\D$. Consequently, their Fourier transforms are entire functions in $\C^{n}$, which, in view of (\ref{Four-trans}), implies that $\hat g$ vanishes on $\Delta_{0}\cup\Delta_{\rho}$. Hence, $(\T,\Delta_{0}\cup\Delta_{\rho})$ cannot be a HUP, and item iv) of Theorem \ref{HUP1} is completely proved.

The above result for the circle $\T$ can be extended, in some sense, to any strictly convex curve $\Gamma$ of class $C^{2}$, see Theorem \ref{HUP2}. A proof of this theorem is the goal of the remaining part of this section.
Thus we consider a function $g\in L^{1}(\Gamma)$ such that $\hat g$ vanishes on the union $\Delta_{0}\cup\Delta_{\rho}$ of two lines. We introduce two diffeomorphisms $s_{0}$ and $s_{\rho}$ on $\Gamma$ defined by the condition that, for $x\in\Gamma$, the segment $(x,s_{0}(x))$ is vertical (resp.\ $(x,s_{\rho}(x))$ makes an angle $\rho+\pi/2$ with the oriented horizontal axis). Note that, by the assumption of strict convexity of $\Gamma$, $s_{0}$ and $s_{\rho}$ are well-defined. They are involutive maps, and $s_{0}$ 
has exactly two fixed points $M_{1},M_{2}$ which are the points where the tangeants to $\Gamma$ are vertical. 
Setting $t_{\rho}=s_{\rho}\circ s_{0}$, a reasoning similar to the one we just made for the circle leads to the conclusion that
\begin{equation}\label{inv-g-gam}
g(s_{0}(x))=-g(x),\qquad g(s_{\rho}(x))=-g(x),\quad a.e.\ x\in\Gamma,
\end{equation}
and thus,
\begin{equation}\label{inv-g-gamma}
g(t_{\rho}(x))=g(x),\quad a.e.\ x\in\Gamma.
\end{equation}
Next, pick any point inside $\Gamma$. By translation, we may assume it is the origin $O$ of the plane, and by dilation, we may also assune that the unit circle $\T$ lies inside $\Gamma$. We denote by $p$ the diffeomorphism from $\Gamma$ to $\T$,
$$
p:\Gamma\to\T,\qquad x\mapsto x/\|x\|,
$$
where $\|x\|$ denotes the euclidean norm in $\R^{2}$. 
\begin{center}
\begin{tikzpicture}[scale=3.]
\coordinate (a) at (.5,0);
\coordinate (b) at (2,0);
\coordinate (c) at (1.12,1.4); 
\coordinate (d) at (1.5,1.5); 
\coordinate (e) at (-.5,1.5);

\coordinate (f) at (-.56,1.2); 
\coordinate (g) at (2.15,.4); 
\coordinate (h) at (.73,1.3); 

\coordinate (i) at (1.5,-.26); 
\path[draw,use Hobby shortcut,closed=true]
(a)..(b)..(d)..(e);
\draw (c) node{$\bullet$};
\draw (1.33,1.35) node {$p(M)$};
\draw (d) node{$\bullet$};
\draw (1.6,1.6) node {$M$};

\draw (f) node{$\bullet$};
\draw (-1.05,1.2) node {$M_{2}=t_{\rho_{1}}(M_{1})$};
\draw (g) node{$\bullet$};
\draw (2.6,.47) node {$M_{1}=t_{\rho_{1}}(M_{2})$};

\draw (h) node{$\bullet$};
\draw (i) node{$\bullet$};
\draw (1.5,-.4) node {$s_{0}(M)$};

\draw[fill=none](h) circle (.4) ;
\draw (.79,1.4) node {$O$};
\draw (-.35,.5) node {$\Gamma$};
\draw (f)--(g);
\draw (h)--(d);
\draw (d)--(i);

\coordinate (j) at (-.56,.35); 
\coordinate (k) at (2.15,-.45); 
\coordinate (l) at (0.36,0.08);
\draw (l) node{$\bullet$};
\draw (.18,-0.05) node {$t_{\rho_{1}}(M)$};
\draw (l)--(i);
\end{tikzpicture}
\\Figure 1 : The convex curve $\Gamma$, the circle $\T$, the points $M, M_{1}, M_{2}$ on $\Gamma$, \\
and some of their transforms, as defined in the text.
\end{center}
Moreover, let 
$$\tilde t_{\rho}:=p\circ t_{\rho}\circ p^{-1}:\T\to\T,\qquad\tilde g=g\circ p^{-1}\in L^{1}(\T),$$
where we notice that $\tilde t_{\rho}$ is a $C^{2}$ diffeomorphism of $\T$. Indeed, $\Gamma$ being a curve of class $C^{2}$, all of the bijective maps $s_{0}, s_{\rho}, t_{\rho}, p$ are diffeomorphisms of class $C^{2}$.

First, it is clear that, when $\rho=0$, $t_{0}$ is the identity map of $\Gamma$, and $\tilde t_{0}$ the identity map of $\T$, with rotation number 0. Second, consider the unique points $M_{1},M_{2}$ alluded to above, e.g.\ $M_{1}$ with the largest $x_{1}$-coordinate on $\Gamma$, and let choose $\rho=\rho_{1}$ such that the segment $(M_{1},M_{2})$ makes an angle $\rho_{1}+\pi/2$ with the oriented horizontal axis. Then, 
$$
t_{\rho_{1}}(M_{1})=s_{\rho_{1}}(M_{1})=M_{2}\quad\text{and}\quad t_{\rho_{1}}(M_{2})=s_{\rho_{1}}(M_{2})=M_{1}.
$$
Hence, $M_{1}$ is a 2-periodic point of $t_{\rho_{1}}$, and, similarly, $p(M_{1})$ is a 2-periodic point of $\tilde t_{\rho_{1}}$, which implies that the rotation number of $\tilde t_{\rho_{1}}$ is $1/2$. From the continuity of the map $\rho\mapsto \tau(\tilde t_{\rho})$, recall Lemma \ref{cont-rot}, we derive that there exists some angle $0<\rho_{2}<\rho_{1}$ such that $\tau_{2}:=\tau(\tilde t_{\rho_{2}})\not\in\Q$. From Denjoy's theorem \ref{Denjoy}, there exists a homeomorphism $h:\T\to\T$ such that $h\circ\tilde t_{\rho_{2}}\circ h^{-1}$ is the rotation $R_{\tau_{2}}$ of angle 
$2\pi\tau_{2}$. It then follows, with (\ref{inv-g-gamma}), that
$$
(\tilde g\circ h^{-1})(x)=(\tilde g\circ h^{-1})(R_{\tau_{2}}(x)),\qquad x\in\T.
$$
By the fact that $\tau_{2}$ is irrational, and the uniqueness of the Fourier coefficients of $\tilde g\circ h^{-1}$, we obtain as above that the function $\tilde g\circ h^{-1}$ is constant on $\T$, hence also the function $g$ on $\Gamma$. From the first equality in (\ref{inv-g-gam}), we finally conclude that $g=0$ almost everywhere on~$\Gamma$. Applying property (\ref{rem-T}) with a rotation finishes the proof of Theorem \ref{HUP2}.
\section{Appendix}
We give a proof of Theorem \ref{single-layer-cont} about the continuity of single layer potential with weakly singular kernels.
\begin{proof}[Proof of Theorem \ref{single-layer-cont}]
First, when $x_{0}\not\in\Gamma$, $\Phi g$ is continuous at $x_{0}$ since
\\[5pt]
-- the map $x\mapsto \vphi(x-y)g(y)$ is continuous at $x_{0}$, for all $y\in\Gamma$,
\\[5pt]
-- $|\vphi(x-y)g(y)|\leq Mg(y)\in L^{p}(\Gamma)\subset L^{1}(\Gamma)$, where $M$ is the sup of $|\vphi(x-y)|$ when $x$ is in a neighborhood of $x_{0}$ that does not intersect $\Gamma$, and $y\in\Gamma$.
\\[5pt]
\indent Second, assume $x_{0}\in\Gamma$. 
Let $\rm T\Gamma_{x_{0}}$ and $n_{x_{0}}$ be the $(n-1)$-dimensional tangeant space to $\Gamma$ at $x_{0}$, and the normal unit vector to $\Gamma$ at $x_{0}$. We write $y=x_{0}+\tau_{y}+\eta_{y}n_{x_{0}}\in\Gamma$ with $\tau\in \rm T\Gamma_{x_{0}}$, and we let $\eta=\gamma(\tau)$ be the equation of $\Gamma$ in a neighborhood of $x_{0}$. 
For some small enough $\delta>0$, let
$$\Gamma(x_{0},\delta)=\{y\in\Gamma,~\tau_{y}\in B(x_{0},\delta)\},$$
where $B(x_{0},\delta)$ is the ball of radius $\delta$ in $\rm T\Gamma_{x_{0}}$. Finally, for $x$ sufficiently close to $x_{0}$, we write
$$
x=x_{0}+\tau_{x}+\eta_{x}n_{x_{0}}.
$$
Then, we have
\begin{multline}\label{diff-Phi}
\Phi g(x)-\Phi g(x_{0})=\int_{y\in\Gamma(x_{0},\delta)}\vphi(x-y)g(y)d\sigma(y)-\int_{y\in\Gamma(x_{0},\delta)}\vphi(x_{0}-y)g(y)d\sigma(y)\\[5pt]
+\int_{y\in\Gamma\setminus\Gamma(x_{0},\delta)}(\vphi(x-y)-\vphi(x_{0}-y))g(y)d\sigma(y).
\end{multline}
For the first two integrals, for $x$ possibly equal to $x_{0}$, we have, by H\"older inequality, with $q$ the exponent conjugate to $p$,
\begin{multline*}
\left|\int_{y\in\Gamma(x_{0},\delta)}\vphi(x-y)g(y)d\sigma(y)\right|\\
\leq\left(\int_{y\in\Gamma(x_{0},\delta)}|\vphi(x-y)|^{q}d\sigma(y)\right)^{1/q}
\left(\int_{y\in\Gamma(x_{0},\delta)}|g(y)|^{p}d\sigma(y)\right)^{1/p}.
\end{multline*}
The last integral is finite since $g\in L^{p}_{d\sigma}(\Gamma)$. For the second one, note that
$$
|\vphi(x-y)|\leq\frac{C}{|x-y|^{\nu}}\leq\frac{C}{|\tau_{x}-\tau_{y}|^{\nu}}.
$$
Moreover, 
$$
d\sigma(y)^{2}=d\tau^{2}+d\eta^{2}=(1+\gamma'(t)^{2})d\tau^{2}\leq4d\tau^{2}
$$
for $\delta$ small (recall that $\gamma'$ is continuous and $\gamma'(0)=0$). Hence
$$
\left|\int_{y\in\Gamma(x_{0},\delta)}|\vphi(x-y)|^{q}d\sigma(y)\right|\leq 2C\int_{B(x_{0},\delta)}
\frac{d\tau}{|\tau_{x}-\tau|^{\nu q}},
$$
(with $\tau_{x}=0$ if $x=x_{0}$) where the last integral over the $(n-1)$-dimensional ball $B(x_{0},\delta)$ is convergent since
$$ 1+\nu/(n-1-\nu)<p\quad\iff\quad \nu q<n-1.
$$
Hence, by taking $\delta$ small, it can be made as small as we wish (uniformly in $\tau_{x}$).

It remains to show that the third integral in (\ref{diff-Phi}) can be made small. For $x$ close to $x_{0}$ so that $x\notin\Gamma\setminus\Gamma(x_{0},\delta)$, the function
$$
x\mapsto\int_{y\in\Gamma\setminus\Gamma(x_{0},\delta)}\vphi(x-y)g(y)d\sigma(y)
$$
is continuous at $x_{0}$ (same argument as in the first case), and thus, the third integral is small when $x$ is sufficiently close to $x_{0}$.
\end{proof}

\obeylines
\texttt{
St\'ephane Rigat, stephane.rigat@univ-amu.fr
Franck Wielonsky, franck.wielonsky@univ-amu.fr
\medskip
Aix Marseille Univ, CNRS, I2M, Marseille, France
}

\end{document}